\documentstyle[12pt]{article}

\topmargin  = -0.2 in \oddsidemargin = 0.25 in
\setlength{\textheight}{8.5in} \setlength{\textwidth}{6in}
\setlength{\unitlength}{1.0 mm}

\begin{document}

\newtheorem{theorem}{Theorem}[section]
\newtheorem{corollary}[theorem]{Corollary}
\newtheorem{definition}[theorem]{Definition}
\newtheorem{conjecture}[theorem]{Conjecture}
\newtheorem{question}[theorem]{Question}
\newtheorem{lemma}[theorem]{Lemma}
\newtheorem{proposition}[theorem]{Proposition}
\newtheorem{example}[theorem]{Example}
\newtheorem{problem}[theorem]{Problem}
\newtheorem{remark}[theorem]{Remark}
\newenvironment{proof}{\noindent {\bf
Proof.}}{\rule{3mm}{3mm}\par\medskip}
\newcommand{\pp}{{\it p.}}
\newcommand{\de}{\em}

\newcommand{\JEC}{{\it Europ. J. Combinatorics},  }
\newcommand{\JCTB}{{\it J. Combin. Theory Ser. B.}, }
\newcommand{\JCT}{{\it J. Combin. Theory}, }
\newcommand{\JGT}{{\it J. Graph Theory}, }
\newcommand{\ComHung}{{\it Combinatorica}, }
\newcommand{\DM}{{\it Discrete Math.}, }
\newcommand{\ARS}{{\it Ars Combin.}, }
\newcommand{\SIAMDM}{{\it SIAM J. Discrete Math.}, }
\newcommand{\SIAMADM}{{\it SIAM J. Algebraic Discrete Methods}, }
\newcommand{\SIAMC}{{\it SIAM J. Comput.}, }
\newcommand{\ConAMS}{{\it Contemp. Math. AMS}, }
\newcommand{\TransAMS}{{\it Trans. Amer. Math. Soc.}, }
\newcommand{\AnDM}{{\it Ann. Discrete Math.}, }
\newcommand{\NBS}{{\it J. Res. Nat. Bur. Standards} {\rm B}, }
\newcommand{\ConNum}{{\it Congr. Numer.}, }
\newcommand{\CJM}{{\it Canad. J. Math.}, }
\newcommand{\JLMS}{{\it J. London Math. Soc.}, }
\newcommand{\PLMS}{{\it Proc. London Math. Soc.}, }
\newcommand{\PAMS}{{\it Proc. Amer. Math. Soc.}, }
\newcommand{\JCMCC}{{\it J. Combin. Math. Combin. Comput.}, }
\newcommand{\GC}{{\it Graphs Combin.}, }
\thispagestyle{empty}
\title{The Second Zagreb Indices of Graphs with Given Degree Sequences
\thanks{This work is supported by National Natural Science
Foundation of China (No.11271256), Innovation Program of Shanghai Municipal Education Commission (No.14ZZ016) and Specialized Research Fund for the Doctoral Program of Higher Education (No.20130073110075).\newline \indent
$^{\dagger}$Correspondent author: Xiao-Dong Zhang (Email: xiaodong@sjtu.edu.cn)
}}
\author{  Wei-Gang Yuan, Xiao-Dong Zhang$^{\dagger}$\\
{\small Department of Mathematics, and MOE-LSC}\\
{\small Shanghai Jiao Tong University} \\
{\small  800 Dongchuan Road, Shanghai, 200240, P.R. China}\\
}
\maketitle

 \begin{minipage}{5in}
 \begin{center}
 Abstract
 \end{center}
 The second Zagreb index of a graph G is denoted by $M_2(G)=\sum_{uv\in E(G)}d(u)d(v)$.
 In this paper,  we investigate properties of  the extremal graphs with the maximum second Zagreb indices with given graphic sequences, in particular graphic bicyclic sequences. Moreover, we obtain  the relations of the second Zagreb indices among the extremal graphs with different degree sequences.\\
 {\it Keywords:} Second Zagreb index; graphic sequence; majorization; bicyclic graph.\\
 {{\it MSC:} 05C12, 05C07}
 \end{minipage}

\vskip 0.5cm

\section{Introduction}

Throughout this paper, $G=(V,E)$ is a simple undirected graph with vertex set $V$ and edge set $E$.
The distance between two vertices $u$ and $v$ which is denoted by $d(u,v)$ is the length of the shortest path that connects $u$ and $v$.
For a vertex $v\in V$, $N(v)$ denotes the neighbor set of $v$ and $d(v)=|N(v)|$ denotes  the degree of $v$. A vertex whose degree is one is called {\it leaf.} Moreover, $(d(v_1), \cdots, d(v_n))$ is called {\it degree sequence} of $G$.
 A nonnegative non-increased integer sequence $\pi=(d_1, d_2, \ldots , d_n)$ is called the {\it graphic sequence} if there exists a  simple graph $G$ such that its degree sequence is exactly $\pi$.  For convenience, we use $d^{(k)}$ to denote the $k$ same degrees  $d$ in $\pi$.
For example, $\pi=(4,4,2,2,1,1)$ is denoted by $(4^{(2)},2^{(2)},1^{(2)})$.
Let $\pi$ be a given graphic  sequence.  Let
$$\Gamma(\pi)=\{G |\ G{\mbox{ is a connected graph with degree sequences}~\pi}\}.$$
Without loss of generality, assume $d(v_i)=d_i$, for $1\le i\le n$, $v_i\in G\in \Gamma(\pi)$.

The {\it second Zagreb index} \cite{Balaban1983} of a graph $G$ is definted by:
\begin{equation}
M_2(G)=\sum_{uv\in E}d(u)d(v).
\end{equation}
For a given graphic sequence $\pi$, let $$M_2(\pi)=max\{M_2(G): G\in \Gamma(\pi)\}.$$
A simple connected graph $G$ is called an {\it optimal graph} in $\Gamma(\pi)$ if $G\in\Gamma(\pi)$ and $M_2(G)=M_2(\pi)$.

The second Zagreb index, whose origin may be dated back to \cite{Gutman2004} and \cite{Nikolic2003},  plays an important role in total $\pi-$electron energy on molecular structure in chemical graph theory.  There are two excellent surveys (\cite{Gutman2004},\cite{Nikolic2003}) on the Zagreb index, which summarize main properties and characterization of the topological index.  Das et al. \cite{das2014} investigated the connections between the  Zagreb index and the Wiener index.  Estes and Wei \cite{estes2014} presented the sharp upper and lower bounds for the Zagreb indices of $k-$tree.  For more information, the readers are referred to \cite{Balaban1983}, \cite{Gutman2004}, \cite{Gutman1975}, \cite{Kier1976}, \cite{Kier1986},
\cite{Nikolic2003}, \cite{Todeschini2000} and references therein.

Recently, Liu and Liu \cite{Liu2012} characterized the all optimal trees in the set of trees with a given tree sequence.
Further, they \cite{Liu2014} investigate some optimal unicycle graphs
in the set of unicycle graphs with a given unicyclic graphic sequence. In this paper, we study  properties of the optimal graphs in the set of all connected graphs with a given graphic sequence $\pi$ that satisfies some conditions, which generalize the main results in \cite{Liu2012} and \cite{Liu2014}.  In addition,  we present some optimal bicyclic graphs in the set of all bicyclic graphs with a given bicyclic graphic  sequence and some relations of the maximum values of the second Zagreb indices with different bicyclic graphic sequences.
The rest of this paper is organized as follows. In Section 2, some notations and the main results of this paper are presented .  In Sections 3, 4 and 5, the proofs of the main results  are presented, respectively.

\section{Preliminary and Main Results}
In order to present the main results of this paper, we introduce some more notations.
Assume $G$ is a rooted graph with root  $v_1$. Let $h(v)$ be the distance between $v$ and $v_1$ and $H_i(G)$ be  the  set of vertices with distance $i$ from vertex $v_1$.

\begin{definition}\cite{zhang2009}
Let $G=(V,E)$ be a graph of root $v_1$. A well-ordering $\prec$ of the vertices is called breadth-first search
ordering with non-increasing degrees (BFS-ordering for short) if the following holds for all vertices $u,v\in V$ :

(1) $u\prec v$ implies $h(u)\le h(v)$;

(2) $u\prec v$ implies $d(u)\ge d(v)$;

(3) if there are two edges $uu_1\in E(G)$ and $vv_1\in E(G)$ such that $u\prec v$, $h(u)=h(u_1)+1$ and $h(v)=h(v_1)+1$,
then $u_1\prec v_1$. 
\end{definition}

For a graphic sequence $\pi=(d_1,d_2,\ldots ,d_n)$ with $\sum_{i=1}^n d_i=2(n+c)$, $d_1\geq d_2\geq c+2$, c is an integer and $c\geq -1$.
We may construct a graph $G_M^*(\pi)$ by following steps.
Select $v_1$ as the root vertex and begin with $v_1$ of the zeroth layer.
Select the vertices $v_2,v_3,v_4,\ldots,v_{d_1+1}$ as the first layer such that $N(v_1)=\{v_2,v_3,v_4,\ldots,v_{d_1+1}\}$;
then, append $d_2-1$ vertices to $v_2$, $d_3-2$ vertices to $v_3$, $\cdots$, $d_{c+3}-2$ vertices to $v_{c+3}$
such that $N(v_2)=\{v_1,v_3,\ldots,v_{c+3},v_{d_1+2},v_{d_1+3},\ldots,v_{d_1+d_2-c-1}\}$,
$N(v_3)=\{v_1,v_2,v_{d_1+d_2-c},\ldots,v_{d_1+d_2+d_3-c-3}\}$, 
$\cdots$, $N(v_{c+3})=\{v_1,v_2,v_{(\sum_{i=1}^{c+2}d_i)-3c},\ldots,\\v_{(\sum_{i=1}^{c+3}d_i)-3c-3}\}$.
After that, append $d_{c+4}-1$ vertices to $v_{c+4}$ such that
$N(v_{c+4})=\{v_1, v_{(\sum_{i=1}^{c+3}d_i)-3c-2},\ldots,v_{(\sum_{i=1}^{c+4}d_i)-3c-4}\}$; $\cdots$ .
Note that $v_1v_2v_3$, $\ldots$, $v_1v_2v_{c_3}$ form $c+1$ triangles in $G_M^*(\pi)$.
Obviously, $G_M^*({\pi})$ is a BFS-ordering graph. In particular,  if $c=1,$  the graph $G_M^*({\pi})$ is denoted by $B_M^*(\pi)$.

The first main result in this paper can be stated as follows.
\begin{theorem}\label{general}
Let $\pi=(d_1,d_2,\ldots ,d_n)$ be a  graphic sequence. If it satisfies the following condition:

$(i)$\quad$\sum_{i=1}^n d_i=2(n+c)$, c is an integer and $c\geq -1$;

$(ii)$\quad $d_1\geq d_2\geq c+2$;

$(iii)$\quad$d_3\ge d_4=d_5=\cdots=d_{c+3}$, for $c\ge 0$;

$(iv)$\quad$d_n=1$;

then $G_M^*({\pi})$ is an optimal graph in $\Gamma(\pi)$. In other words, for any graph $G\in \Gamma(\pi)$, $M_2(G)\le M_2(G_M^*(\pi))$.
\end{theorem}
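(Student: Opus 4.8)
The plan is to use the fact that, with the degree sequence $\pi$ fixed, the value $M_2(G)=\sum_{uv\in E}d(u)d(v)$ depends only on which pairs of vertices are adjacent, and to show that the greedy breadth-first assignment is the one maximising this weighted count. The basic tool is the two-switch: if $xy,zw\in E(G)$ while $xz,yw\notin E(G)$, then replacing $\{xy,zw\}$ by $\{xz,yw\}$ preserves every vertex degree, and a one-line computation gives
\begin{equation}
\Delta M_2=(d(x)-d(w))(d(z)-d(y)).
\end{equation}
Hence a switch that groups the two higher-degree endpoints together never decreases $M_2$, and strictly increases it unless one factor is zero. This is the rearrangement-inequality core of the argument: $M_2$ rewards adjacencies between vertices of large degree, exactly the feature the BFS-ordering is designed to maximise.

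First I would fix an optimal graph $G\in\Gamma(\pi)$ and, among all optimal graphs, pass to one that is extremal for a secondary statistic --- for concreteness, the graph whose neighbourhoods, read off in the degree order $d(v_1)\ge\cdots\ge d(v_n)$, are lexicographically largest. The aim is to prove such a $G$ admits a BFS-ordering in the sense of the Definition, by checking its three defining properties in turn. If property $(2)$ fails, a vertex of smaller degree sits nearer the root than one of larger degree, and the switch formula lets me exchange their positions without decreasing $M_2$ while strictly advancing the secondary statistic, a contradiction; properties $(1)$ and $(3)$ are secured by analogous neutral-or-improving switches that draw large-degree vertices towards the root and thread the neighbourhoods layer by layer. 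Throughout, the switches must be chosen so that the required non-edges $xz,yw$ are genuinely absent and so that the graph stays connected; condition $(iv)$, which furnishes a leaf $v_n$, provides a convenient reservoir for repairing connectivity whenever a switch threatens to disconnect $G$.

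Once $G$ is known to carry a BFS-ordering, I would identify it with $G_M^*(\pi)$ using conditions $(i)$--$(iii)$. Condition $(i)$ fixes $|E|=n+c$, hence a cyclomatic number of $c+1$; the greedy BFS layout forces these $c+1$ independent cycles to materialise as triangles sharing the edge $v_1v_2$ at the root, precisely the configuration $v_1v_2v_3,\ldots,v_1v_2v_{c+3}$ appearing in the construction. Conditions $(ii)$, namely $d_1\ge d_2\ge c+2$, and $(iii)$, namely $d_3\ge d_4=\cdots=d_{c+3}$, then guarantee that the vertices feeding those triangles have exactly the degrees the construction assigns and that the resulting layer-by-layer neighbourhoods match the prescription $N(v_2),N(v_3),\ldots$ verbatim, so that the BFS graph is uniquely $G_M^*(\pi)$ and $M_2(G)=M_2(G_M^*(\pi))$.

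The main obstacle is the BFS-ordering step when $c\ge 0$: the classical BFS-optimality arguments run most smoothly on trees, whereas here the fixed surplus of edges forces $c+1$ cycles, so I must simultaneously control where those extra edges land --- pinning them into triangles at the root rather than elsewhere --- while keeping every switch valid and the graph connected. In particular, when a candidate switch would recreate an adjacency already present inside a triangle, or would momentarily split off a component, the single two-switch must be replaced by a more careful two-step exchange, and showing that such a repairing exchange always exists and never lowers $M_2$ is the delicate technical heart on which the whole theorem depends.
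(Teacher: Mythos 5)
Your core tool is the right one: the degree-preserving two-switch with $\Delta M_2=(d(x)-d(w))(d(z)-d(y))$ is exactly the paper's Lemma \ref{trans1}, and the target configuration (a BFS-ordered graph with $c+1$ triangles through the root edge $v_1v_2$) is the correct one. But there is a genuine gap at the step you yourself flag as the ``delicate technical heart'': you assert that once an optimal graph is known to carry a BFS-ordering, conditions $(i)$--$(iii)$ force the $c+1$ independent cycles to ``materialise as triangles sharing the edge $v_1v_2$ at the root.'' This does not follow. The three defining properties of a BFS-ordering constrain layers, degrees along the order, and parents, but say nothing about where the $c+1$ surplus edges sit; a graph can satisfy all three while carrying a cycle far from the root. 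The paper's proof runs in the opposite logical order for precisely this reason: it first proves, by a separate induction occupying most of Section 3 (Lemma \ref{i=3} for the base triangle $v_1v_2v_3$, Lemma \ref{v1vi} to force $v_1v_i\in E$, and Lemma \ref{introduction} to complete the induction up to $v_{c+3}$), that some optimal graph already has the $c+1$ triangles $v_1v_2v_3,\ldots,v_1v_2v_{c+3}$; only then does it build the BFS ordering around that fixed structure and use switches to make degrees monotone along it. Each of those lemmas requires its own case analysis with carefully chosen switches (e.g.\ walking along a shortest path to a leaf to derive a contradiction from $\min\{d(u),d(w)\}>d(z)=1$), none of which your outline supplies.

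A second, related omission: you never say where condition $(iii)$, $d_3\ge d_4=\cdots=d_{c+3}$, actually enters your argument, describing it only as guaranteeing that ``the vertices feeding those triangles have exactly the degrees the construction assigns.'' The paper's Remark shows by explicit example ($\pi=(4,4,3,3,2,1,1)$) that the theorem is false without $(iii)$, so any correct proof must invoke it at a specific point; in the paper it is needed in Case 2.2 of Claim 3 of Lemma \ref{introduction} to get $d(v_i)=d(w)$ so that the switch is non-decreasing. Until your outline identifies an analogous place where $(iii)$ is used, it proves too much and therefore cannot be complete as stated.
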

\begin{remark}  If $\pi$ is a tree degree sequence, then there exists only one tree with degree $\pi$ having a BFS order (for example, see \cite{Zhang2007}). Hence it follows from Theorem~\ref{general} that the main results in \cite{Liu2012} and \cite{Liu2014} hold for $c=-1$  and $c=0$, respectively.
\begin{corollary}\label{tree}(\cite{Liu2012})
Let $\pi$ be a  tree degree sequence. The BFS-tree in $\Gamma(\pi)$ reaches the maximum second Zagreb index.
\end{corollary}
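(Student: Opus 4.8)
The plan is to derive the corollary as the special case $c=-1$ of Theorem~\ref{general}, so almost all the work reduces to verifying the four hypotheses and identifying the construction $G_M^*(\pi)$ with the BFS-tree. First I would observe that for a tree on $n$ vertices we have $|E|=n-1$, hence $\sum_{i=1}^n d_i = 2(n-1) = 2(n+c)$ with $c=-1$, which is exactly condition $(i)$. Condition $(ii)$ requires $d_1 \geq d_2 \geq c+2 = 1$; since $\pi$ is the degree sequence of a connected graph on at least two vertices, every entry is at least $1$, so this holds. Condition $(iii)$ is stated only for $c \geq 0$ and is therefore vacuous when $c=-1$. Finally, condition $(iv)$ asks that $d_n=1$, which holds because any tree on at least two vertices has a leaf, and the entries are non-increasing, so the smallest degree equals $1$.

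Next I would check that $G_M^*(\pi)$ with $c=-1$ is precisely the BFS-tree. Unwinding the construction for $c=-1$: the index range $v_3,\ldots,v_{c+3}$ collapses to the empty range (since $c+3 = 2$), so the step that appends $d_i-2$ vertices to $v_3,\ldots,v_{c+3}$ contributes nothing, and the only first-generation step appends $d_2-1$ vertices to $v_2$. Likewise the subsequent step appends $d_{c+4}-1 = d_3-1$ vertices to $v_3$, and so on, matching the usual greedy BFS layering of the tree. The text before the corollary already records that $G_M^*(\pi)$ is a BFS-ordering graph, and for $c=-1$ the stated triangle count $c+1 = 0$ confirms that the resulting graph is acyclic; being connected with the prescribed degree sequence, it is a tree, hence a BFS-tree.

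The remaining point, and the only one carrying any subtlety, is uniqueness: I would invoke the cited result~\cite{Zhang2007} that a tree degree sequence admits a unique tree possessing a BFS-ordering. This lets me identify the BFS-tree appearing in the corollary's statement with $G_M^*(\pi)$ unambiguously. With the identification established and the hypotheses checked, the conclusion $M_2(G) \leq M_2(G_M^*(\pi))$ for every $G \in \Gamma(\pi)$ is immediate from Theorem~\ref{general}, and since $G_M^*(\pi)$ itself lies in $\Gamma(\pi)$, it attains the maximum. I do not anticipate a genuine obstacle here; the only delicate bookkeeping is confirming that the layer-by-layer construction degenerates correctly at $c=-1$ and that no triangle-forming step survives, after which the corollary is a direct specialization.
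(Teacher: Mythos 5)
Your proposal is correct and follows essentially the same route as the paper: the Remark after Theorem~\ref{general} derives the corollary precisely by specializing to $c=-1$ and invoking the uniqueness of the BFS-tree from \cite{Zhang2007}. Your verification of conditions $(i)$--$(iv)$ and the degeneration of the construction of $G_M^*(\pi)$ at $c=-1$ is the same argument, just spelled out in more detail than the paper bothers to.
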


\begin{corollary}\label{unicyclic}(\cite{Liu2014})
Let $\pi=(d_1, \cdots, d_n)$ be  a unicycle graphic sequence  with $d_n=1$. Then there exists an optimal graph $G\in \Gamma(\pi)$
which has a BFS-ordering $\{v_1, \cdots, v_n\}$ with a triangle $v_1v_2v_3$.
\end{corollary}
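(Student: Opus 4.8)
The plan is to derive this corollary directly from Theorem~\ref{general} by specializing to $c=0$, so that the entire argument reduces to verifying that the four hypotheses $(i)$--$(iv)$ hold for a unicyclic graphic sequence with $d_n=1$. First I would recall that a connected unicyclic graph on $n$ vertices has exactly $n$ edges; hence $\sum_{i=1}^n d_i = 2n = 2(n+0)$, which is precisely condition $(i)$ with $c=0$. This fixes the value of $c$ and is the one place where the structural meaning of ``unicyclic'' enters the verification.

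Next I would check condition $(ii)$, namely $d_1 \geq d_2 \geq c+2 = 2$. Since $\pi$ is nonincreasing, it suffices to show $d_2 \geq 2$. The unique cycle of a unicyclic graph has length at least $3$, so it contains at least three vertices of degree at least $2$; in particular at least two such vertices exist, forcing $d_2 \geq 2$. (Equivalently, if $d_2 = 1$ then every vertex other than $v_1$ would be a leaf and $G$ would be a star, i.e.\ a tree, contradicting that $G$ is unicyclic.) Condition $(iii)$ is vacuous when $c=0$: the prescribed equalities $d_4 = d_5 = \cdots = d_{c+3} = d_3$ range over the index set $\{4,\ldots,c+3\}=\{4,\ldots,3\}$, which is empty, so nothing is imposed. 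Finally, condition $(iv)$, $d_n = 1$, is part of the hypothesis.

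With all four hypotheses verified, Theorem~\ref{general} applies and yields that $G_M^*(\pi)$ is an optimal graph in $\Gamma(\pi)$. It then remains only to read off the two structural features claimed in the corollary from the construction of $G_M^*(\pi)$: by construction $G_M^*(\pi)$ carries a BFS-ordering $\{v_1, \ldots, v_n\}$, and, since $c+1 = 1$, it contains exactly one triangle, namely $v_1 v_2 v_3$. Taking $G = G_M^*(\pi)$ then completes the proof.

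I expect no genuine obstacle here, since the statement is a specialization of the main theorem; the only care needed is in the bookkeeping of condition $(ii)$---confirming that connectivity together with the presence of a cycle forces $d_2 \geq 2$---and in observing that condition $(iii)$ degenerates to an empty constraint at $c=0$.
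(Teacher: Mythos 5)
Your proposal is correct and follows exactly the paper's route: the paper derives this corollary in the remark after Theorem~\ref{general} by simply invoking that theorem with $c=0$, and your write-up just fills in the (routine) verification of hypotheses $(i)$--$(iv)$ that the paper leaves implicit. The bookkeeping is right --- in particular, $\sum_i d_i = 2n$ forces $c=0$, the cycle forces $d_2\ge 2$, and condition $(iii)$ is indeed vacuous at $c=0$ --- so nothing further is needed.
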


Moreover, condition $(iii)$ in Theorem \ref{general} can not be deleted. For example, let $\pi=(4,4,3,3,2,1,1)$ which doesn't satisfy condition$(iii)$.
In Figure 1, $G$ is produced by the method in Theorem \ref{general} and $G'$ is not
isomorphic to $G$. It's easy to see that $M_2(G')=M_2(G)+1$.
\end{remark}

\begin{picture}(60,60)
\put(25,52){$v_{1}$}
\put(25,50){\circle{0.5}}
\put(25,17){$v_{2}$}
\put(25,20){\circle{0.5}}
\put(30,37){$v_{4}$}
\put(25,50){\line(0,-1){30}}\put(25,50){\line(1,-3){5}}\put(30,35){\circle{0.5}}
\put(25,20){\line(1,3){5}}
\put(16,37){$v_{3}$}
\put(20,35){\circle{0.5}}
\put(25,50){\line(-1,-3){5}}\put(25,20){\line(-1,3){5}}
\put(56,37){$v_{5}$}
\put(55,35){\circle{0.5}}
\put(25,50){\line(2,-1){30}}
\put(25,20){\line(2,1){30}}
\put(6,35){$v_{6}$}
\put(10,35){\circle{0.5}}
\put(20,35){\line(-1,0){10}}
\put(40,37){$v_{7}$}
\put(40,35){\circle{0.5}}
\put(30,35){\line(1,0){10}}
\put(30,10){$G$}

\put(105,52){$v_{1}$}
\put(105,17){$v_{2}$}
\put(105,50){\circle{0.5}}\put(105,50){\line(0,-1){30}}\put(105,50){\line(1,-1){15}}\put(120,35){\circle{0.5}}
\put(86,35){$v_{3}$}
\put(90,35){\circle{0.5}}\put(90,35){\line(1,1){15}}\put(90,35){\line(1,-1){15}}
\put(120,37){$v_{4}$}
\put(105,20){\circle{0.5}}\put(105,20){\line(1,1){15}}
\put(90,35){\line(1,0){30}}
\put(120,52){$v_{5}$}
\put(120,50){\circle{0.5}}
\put(105,50){\line(1,0){15}}
\put(120,22){$v_{6}$}
\put(120,20){\circle{0.5}}
\put(105,20){\line(1,0){15}}
\put(135,52){$v_{7}$}
\put(135,50){\circle{0.5}}
\put(120,50){\line(1,0){15}}
\put(108,10){$G'$}
\put(60,5){\bf Figure 1}
\end{picture}

In order to present the results of bicyclic graphs with given bicyclic graphic sequences, we introduce some more notations.

A {\it bicyclic graph} is a connected graph with $n\geq 4$ vertices and $n+1$ edges.
Let $\pi=(d_1, \cdots, d_n)$ be a graphic  sequence.
If $\pi$ is a degree sequence of some bicyclic graphs, $\pi$ is called a {\it bicyclic graphic}. For a given bicyclic graphic  sequence $\pi$, let
$${\mathcal{B}}_{\pi}=\{ G|\ G\  {\mbox{ is bicyclic graph with degree sequences}~\pi}\}$$

If $\pi$ is  a bicyclic graphic sequence, then $\Sigma_{i=1}^n d_i=2n+2$.
Denote by $B(p,q)$  a bicyclic graph of order $n$ obtained from two vertex-disjoint cycles $C_p$ and $C_q$ by identifying vertices $u$ of $C_p$ and $v$ of $C_q$ with $p+q-1=n$. Denote by $B(p,r,q)$  a bicyclic graph of order $n$  obtained from two vertex-disjoint cycles $C_p$ and $C_q$ by joining vertices $u$ of $C_p$ and $v$ of $C_q$
by a new path $uu_1u_2\cdots u_{r-1}v$ with length $r(r\geq1)$ with $p+q+r-1=n.$
 Denote by $B(P_k,P_l,P_m)~(1\leq m\leq min\{k,l\})$ a bicyclic graph of order $n$ obtained from three pairwise internal disjoint paths $xv_1v_2\cdots v_{k-1}y$, $xu_1u_2\cdots u_{l-1}y$ and $xw_1w_2\cdots w_{m-1}y$  with $k+l+m-1=n$.
 Denote by $B(p,q;p_1,p_2,\ldots,p_s)$ a bicyclic graph of order $n$ obtained from $B(p,q)$ appending $s$ paths on the common vertex of the two cycles, where
 $p+q+p_1+\cdots+p_s-1=n$, $s$ is the number of leaves and $p_1,p_2,\ldots,p_s$ denote the lengths of the $s$ paths.


The results of bicyclic graphic sequences can be stated as follows.
\begin{theorem}\label{main} Let $\pi=(d_1, \cdots, d_n)$ be a bicyclic graphic  sequence. Denote by $s$ the number of leaves in the graph of ${\mathcal{B}}_{\pi}$.

(1). If $d_n=2$ and $d_2\ge 3$, then the optimal graphs in the set ${\mathcal{B}}_{\pi}$ are $B(p,1,q)$ or $B(P_k,P_l,P_1)$ with $p+q=n$ and $k+l=n$.
In other words, for any $G\in {\mathcal{B}}_{\pi}$,  $M_2(G)\le 4n+17$  with equality if and only if $G$ is $B(p,1,q)$ or $B(P_k,P_l,P_1)$ with $p+q=n$ and $k+l=n$.

(2). If $d_n=2$ and $d_2= 2$, then the optimal graphs in the set ${\mathcal{B}}_{\pi}$ are $B(p,q)$ with $p+q=n$. In other words, for any
$G\in {\mathcal{B}}_{\pi}$,  $M_2(G)\le 4n+20$  with equality if and only if $G$ is $B(p,q)$ with $p+q=n$.

(3). If $d_n=1$ and $d_2=2$ and $s\le \frac{n-5}{2}$, then the optimal graphs in the set ${\mathcal{B}}_{\pi}$ are $B(p,q;p_1,p_2,\ldots,p_s)$ with $p_i\geq 2$ for $1\leq i\leq s$. In other words, for any $G\in {\mathcal{B}}_{\pi}$,  $M_2(G)\le 4n+2s^2+10s+20$ with equality if and only if $G$ is $B(p,q;p_1,p_2,\ldots,p_s)$ with $p_i\geq 2$ for $1\leq i\leq s$.

(4). If $d_n=1$ and $d_2=2$ and $s> \frac{n-5}{2}$, then the optimal graphs in the set ${\mathcal{B}}_{\pi}$ are $B(3,3;2,\cdots,2,1,\cdots,1)$ with $p_1=\cdots=p_{n-s-5}=2$ and $p_{n-s-4}=\cdots=p_s=1$. In other words, for any $G\in {\mathcal{B}}_{\pi}$, $M_2(G)\le sn+6n+s+10$
with equality if and only if $G$ is $B(3,3;2,\cdots,2,1,\cdots,1)$ with $p_1=\cdots=p_{n-s-5}=2$ and $p_{n-s-4}=\cdots=p_s=1$.

(5). If $d_n=1$ and $d_2\ge 3$,
 then $B_M^*(\pi)$ is an optimal graph in the set ${\mathcal{B}}_{\pi}$.
\end{theorem}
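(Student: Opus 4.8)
The plan is to organise the five cases according to how many vertices have degree at least three, which is controlled by $d_2$. Case~(5), with $d_n=1$ and $d_2\ge 3$, requires no new work: a bicyclic graphic sequence has $\sum_{i=1}^n d_i=2n+2=2(n+1)$, so it fits Theorem~\ref{general} with $c=1$, and there conditions (i)--(iv) become $d_1\ge d_2\ge 3$, $d_n=1$, and the automatic inequality $d_3\ge d_4$. Hence $B_M^*(\pi)=G_M^*(\pi)$ is optimal by Theorem~\ref{general}.

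Cases (2), (3), (4) all have $d_2=2$, so there is a single vertex $w$ with $d(w)=d_1\ge 3$ while every other vertex has degree $1$ or $2$. The engine of the proof is one counting identity. Let $a$ be the number of degree-$2$ neighbours of $w$ and $b=d_1-a$ the number of leaves adjacent to $w$. Splitting $M_2$ into edges at $w$ and edges away from $w$, using that the neighbour-degree sum at $w$ equals $2a+b=d_1+a$, that each of the $s-b$ leaves not adjacent to $w$ contributes exactly one edge to a degree-$2$ vertex (in a connected graph with $n\ge 4$ no two leaves are adjacent and no degree-$2$ vertex has two leaf neighbours), and that every remaining edge joins two degree-$2$ vertices, I obtain
\[
M_2(G)=d_1^2-2d_1+4n+4-2s+(d_1-2)\,a.
\]
As $d_1\ge 3$, this is strictly increasing in $a$, so over $\mathcal{B}_\pi$ the maximum of $M_2$ is attained exactly when $a$ is largest, subject to $a\le\min\{d_1,\,n-1-s\}$. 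In case~(2) one has $s=0$, forcing $a=d_1=4$, so $M_2=4n+20$ for every such graph, and a short argument ($G-w$ is a forest and both cycles pass through $w$) shows these are precisely the graphs $B(p,q)$. For $s\ge 1$ the comparison of $d_1=s+4$ with $n-1-s$ gives the dichotomy: if $s\le(n-5)/2$ then $a=d_1$ is attainable (case~3), every neighbour of $w$ has degree $2$, all $s$ pendant paths have length $\ge 2$, and $G=B(p,q;p_1,\dots,p_s)$ with $p_i\ge 2$; if $s>(n-5)/2$ then $a$ is capped at $n-1-s$ (case~4), every degree-$2$ vertex must meet $w$, which forces both cycles to be triangles and every pendant path to have length at most $2$, giving $B(3,3;2,\dots,2,1,\dots,1)$. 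Substituting these optimal values of $a$ into the identity yields $4n+2s^2+10s+20$ and $sn+6n+s+10$, respectively.

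Case~(1) has $d_2\ge 3$ and $d_n=2$, which forces the degree sequence $(3,3,2^{(n-2)})$ with exactly two degree-$3$ vertices $u,v$ and no leaves. Here I would evaluate $M_2$ directly according to whether $u\sim v$: if $u$ and $v$ are adjacent, the edge $uv$ contributes $9$, the four other edges at $u,v$ contribute $6$ each, and the remaining $n-4$ edges contribute $4$ each, so $M_2=4n+17$; if they are non-adjacent, the six edges at $u,v$ contribute $6$ each and the remaining $n-5$ edges contribute $4$ each, so $M_2=4n+16$. Thus $M_2$ takes only these two values and is maximal exactly when $u\sim v$. A brief structural check then identifies the connected bicyclic graphs with this degree sequence in which $u,v$ are adjacent as either $B(p,1,q)$, two cycles joined by the edge $uv$, or $B(P_k,P_l,P_1)$, a $\theta$-graph one of whose three paths is $uv$.

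The hardest part is not the arithmetic but the structural steps that turn a statement about the optimal value of $a$ (or about the adjacency of $u,v$) into the stated isomorphism types while remaining inside the connected bicyclic class. For case~(4) one must check that requiring every degree-$2$ vertex to meet $w$ still leaves room for two triangles together with the prescribed numbers of length-$2$ and length-$1$ pendant paths; this is exactly where $s>(n-5)/2$ (equivalently $n-1-s<s+4$) and the feasibility bound $s\le n-5$ enter, and where one argues that no cycle through $w$ can exceed length $3$ once all its non-$w$ vertices are forced onto $w$. Proving that these attainable configurations are the only maximisers---rather than merely meeting the bound---is the delicate point; the counting identity makes the remaining optimisation immediate once the admissible range of $a$ is pinned down.
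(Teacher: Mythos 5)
Your proposal is correct, and its overall architecture matches the paper's: split on $d_2$ and $d_n$, dispose of case (5) by invoking Theorem \ref{general} with $c=1$ (where condition (iii) degenerates to the vacuous $d_3\ge d_4$, and $\Gamma(\pi)={\mathcal B}_\pi$ since any connected graph with $\sum d_i=2n+2$ is bicyclic), and handle the remaining cases by direct computation after observing that $d_n=2$, $d_2\ge 3$ forces $\pi=(3,3,2^{(n-2)})$ and that $d_2=2$ forces a unique vertex $w$ of degree $d_1=s+4$ through which both cycles pass and at which all pendant paths attach. Where you genuinely depart from the paper is in cases (2)--(4): the paper's Lemma \ref{main2} simply declares that the result ``can be proved easily by exhaustion'' and then lists the claimed optimal graphs together with their $M_2$ values, whereas you derive the closed form $M_2(G)=d_1^2-2d_1+4n+4-2s+(d_1-2)a$, where $a$ is the number of degree-two neighbours of $w$, and reduce the whole problem to maximizing $a$ subject to $a\le\min\{d_1,\,n-1-s\}$. (I checked the identity and the two substitutions: $a=d_1=s+4$ gives $4n+2s^2+10s+20$ and $a=n-1-s$ gives $sn+6n+s+10$, matching the paper's values after the change of variable $k=n-s-1$.) This buys you two things the paper leaves implicit: the threshold $s\le(n-5)/2$ falls out as exactly the condition $d_1\le n-1-s$, and the ``only if'' half of each equality statement is automatic because $M_2$ is strictly increasing in $a$ and the extremal value of $a$ pins down the isomorphism type (all $p_i\ge 2$ in case (3); two triangles and all $p_i\le 2$, hence the stated multiplicities, in case (4)). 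The price is that you must verify attainability of the extremal $a$ and carry out the structural classification of $\mathcal B_\pi$ when $d_2=2$ explicitly, which you do correctly in outline; spelling out that classification (base is $B(p,q)$, all attached trees are paths rooted at $w$) is the one step you should not leave as a remark if this were written up in full.
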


\begin{remark}
$B_M^*(\pi)$ is not the unique optimal graph for $d_n=1$ and $d_2\ge 3$. For example, let $\pi=(4^{(5)},1^{(8)})$.
Figure 2 shows two different optimal graphs.
\end{remark}

\begin{picture}(60,58)
\put(25,52){$v_{1}$}
\put(25,17){$v_{2}$}
\put(25,50){\circle{0.5}}\put(25,50){\line(0,-1){30}}\put(25,50){\line(1,-1){15}}\put(40,35){\circle{0.5}}
\put(6,35){$v_{3}$}
\put(10,35){\circle{0.5}}\put(10,35){\line(1,1){15}}\put(10,35){\line(1,-1){15}}
\put(40,37){$v_{4}$}
\put(25,20){\circle{0.5}}\put(25,20){\line(1,1){15}}
\put(50,52){$v_{5}$}
\put(25,50){\line(1,0){25}}\put(50,50){\circle{0.5}}
\put(6,20){$v_{6}$}
\put(25,20){\line(-1,0){15}}\put(10,20){\circle{0.5}}
\put(4,52){$v_{7}$}\put(14,52){$v_{8}$}
\put(10,35){\line(1,3){5}}\put(10,35){\line(-1,3){5}}\put(5,50){\circle{0.5}}\put(15,50){\circle{0.5}}
\put(40,17){$v_{9}$}\put(50,22){$v_{10}$}
\put(40,35){\line(0,-1){15}}\put(40,35){\line(1,-1){10}}\put(40,20){\circle{0.5}}\put(50,25){\circle{0.5}}
\put(65,52){$v_{11}$}\put(63,39){$v_{12}$}\put(50,32){$v_{13}$}
\put(50,50){\line(1,0){15}}\put(50,50){\line(1,-1){11}}\put(50,50){\line(0,-1){15}}
\put(65,50){\circle{0.5}}\put(61,39){\circle{0.5}}\put(50,35){\circle{0.5}}
\put(28,10){$B_M^*({\pi})$}

\put(115,52){$v_{1}$}
\put(115,17){$v_{2}$}
\put(115,50){\circle{0.5}}\put(115,50){\line(0,-1){30}}\put(115,50){\line(1,-1){15}}\put(130,35){\circle{0.5}}
\put(130,37){$v_{4}$}
\put(115,20){\circle{0.5}}\put(115,20){\line(1,1){15}}
\put(100,45.5){$v_{3}$}
\put(115,50){\line(-2,-1){15}}\put(100,42.5){\circle{0.5}}
\put(100,24.5){$v_{5}$}
\put(100,42.5){\line(0,-1){15}}\put(115,20){\line(-2,1){15}}\put(100,27.5){\circle{0.5}}
\put(135,52){$v_{6}$}
\put(115,50){\line(1,0){20}}\put(135,50){\circle{0.5}}
\put(96,20){$v_{7}$}
\put(115,20){\line(-1,0){15}}\put(100,20){\circle{0.5}}
\put(81,47.5){$v_{8}$}\put(81,37.5){$v_{9}$}
\put(100,42.5){\line(-3,1){15}}\put(100,42.5){\line(-3,-1){15}}\put(85,37.5){\circle{0.5}}\put(85,47.5){\circle{0.5}}
\put(130,17){$v_{10}$}\put(140,22){$v_{11}$}
\put(130,35){\line(0,-1){15}}\put(130,35){\line(1,-1){10}}\put(130,20){\circle{0.5}}\put(140,25){\circle{0.5}}
\put(80,32.5){$v_{12}$}\put(80,22.5){$v_{13}$}
\put(100,27.5){\line(-3,1){15}}\put(100,27.5){\line(-3,-1){15}}\put(85,22.5){\circle{0.5}}\put(85,32.5){\circle{0.5}}
\put(118,10){$G'$}
\put(60,5){\bf Figure 2}
\end{picture}

For two different non-increasing graphic sequences $\pi=(d_1,d_2,\ldots,d_n)$ and $\pi'=(d_1',d_2',\ldots,d_n')$,
we write $\pi\triangleleft\pi'$ if  $\sum_{i=1}^nd_i=\sum_{i=1}^nd_i'$
and $\sum_{i=1}^jd_i\leq \sum_{i=1}^jd_i'$ for all $j=1,2,\ldots,n$. Such an ordering is called {\it majorization} \cite{Marshall1979}.

\begin{theorem}\label{differentdegree}
Let $\pi$ and $\pi'$ be two non-increasing bicyclic degree sequences.
If  $\pi\triangleleft\pi'$, then $M_2(\pi)\le M_2(\pi')$ with equality if and only if $\pi=\pi'$.
\end{theorem}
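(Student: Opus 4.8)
The plan is to prove the inequality by reducing the general majorization relation to a sequence of elementary ``unit transfers'' and then comparing the two maximal values across a single such transfer. Throughout I use the fact that $M_2(\pi)$ is attained by some graph in $\mathcal{B}_\pi$ whose structure is pinned down by Theorem~\ref{main}.

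First I would establish a chain lemma: if $\pi\triangleleft\pi'$ are two bicyclic graphic sequences with $\pi\neq\pi'$, then there are bicyclic graphic sequences $\pi=\sigma_0\triangleleft\sigma_1\triangleleft\cdots\triangleleft\sigma_m=\pi'$ such that, for each $k$, the sequence $\sigma_{k+1}$ is obtained from $\sigma_k$ by adding $1$ to one coordinate $d_s$ and subtracting $1$ from a later coordinate $d_t$ (so $s<t$, and the two sequences differ only in these two positions). This is the standard decomposition of a majorization relation into elementary Robin--Hood transfers; the only point needing attention is that every intermediate $\sigma_k$ can again be chosen bicyclic graphic. Since all terms share the common sum $2n+2$, it suffices to check that the Erd\H{o}s--Gallai realizability conditions persist along the chain and that each $\sigma_k$ remains realizable by a connected graph with $n+1$ edges; I expect this by a routine induction on the discrepancy measure $\sum_{i}\left|\sum_{j\le i}d_j-\sum_{j\le i}d'_j\right|$. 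With the chain in hand, transitivity reduces the theorem to the case where $\pi'$ arises from $\pi$ by a single unit transfer, and the equality case follows at once: if $\pi=\pi'$ the two maxima coincide, while if $\pi\neq\pi'$ the chain has at least one genuine step, so a strict elementary inequality propagates to $M_2(\pi)<M_2(\pi')$.

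For the elementary step, write $a=d_s$ and $b=d_t$, so $a\ge b$ because $s<t$ and $\pi$ is non-increasing. Let $G$ be an optimal graph for $\pi$, so $M_2(G)=M_2(\pi)$. I would then select a vertex $w\in N_G(v_t)$ with $w\neq v_s$, $w\notin N_G(v_s)$, of largest possible degree among the neighbours of $v_t$, and form $H$ by deleting $v_tw$ and inserting $v_sw$, so that $H$ has degree sequence $\pi'$. A direct bookkeeping of the edge products yields
\[
M_2(H)-M_2(G)=(a-b+2)\,d(w)+\Sigma_s-\Sigma_t+[\,v_s\sim v_t\,]\,(b-a-1),
\]
where $\Sigma_s=\sum_{x\in N_G(v_s)\setminus\{v_t\}}d(x)$, $\Sigma_t=\sum_{y\in N_G(v_t)\setminus\{v_s\}}d(y)$, and $[\,\cdot\,]$ is the indicator of adjacency. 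Since $a\ge b$ and $d(w)\ge 1$, the first and third terms together are at least $1$ in either adjacency case, so the whole expression is strictly positive as soon as $\Sigma_s\ge\Sigma_t$. This gives $M_2(\pi')\ge M_2(H)>M_2(G)=M_2(\pi)$, which is exactly the strict inequality required for the elementary step.

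The main obstacle is twofold, and I would resolve both using the optimality and explicit shape of $G$. The first part is the inequality $\Sigma_s\ge\Sigma_t$: I would show that in any $M_2$-optimal graph a vertex of larger degree has neighbour-degree sum at least as large, via an exchange argument, since otherwise redistributing an edge between $v_s$ and $v_t$ would strictly raise $M_2$ and contradict optimality of $G$. The second part is ensuring that an admissible $w$ exists and that $H$ is again connected (hence genuinely in $\mathcal{B}_{\pi'}$): if every neighbour of $v_t$ other than $v_s$ also lies in $N_G(v_s)$, or if the naive move disconnects the graph, a single edge switch is blocked, and one must either first perform a degree-preserving $2$-switch that stays within the optimal graphs of $\pi$, or read off a valid $w$ directly from the explicit optimal graphs furnished by Theorem~\ref{main} (the formulas and graphs in cases $(1)$--$(4)$, and the BFS graph $B_M^*(\pi)$ in case $(5)$). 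Carrying these two technical points through, together with the chain reduction, completes the proof.
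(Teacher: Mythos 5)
Your overall strategy coincides with the paper's: decompose the majorization into unit transfers (the paper's Lemma~\ref{majorization}) and, for a single transfer, relocate one edge endpoint from $v_t$ to $v_s$ in an optimal graph for $\pi$ and show $M_2$ strictly increases (the paper's Lemma~\ref{trans3}). Your bookkeeping identity for $M_2(H)-M_2(G)$ is correct, and it does isolate the real crux, namely $\Sigma_s\ge\Sigma_t$. The genuine gap is in how you propose to prove that inequality: you say that otherwise ``redistributing an edge between $v_s$ and $v_t$ would strictly raise $M_2$ and contradict optimality of $G$,'' but that redistribution is exactly the degree-changing move under discussion --- it produces a graph with degree sequence $\pi'$, not $\pi$, so it cannot contradict optimality of $G$ within ${\mathcal{B}}_{\pi}$. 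A non-circular exchange argument has to be degree-preserving: a $2$-switch as in Lemma~\ref{trans1} between a private neighbour of $v_s$ and a private neighbour of $v_t$ shows that in an optimal graph every private neighbour of the higher-degree vertex has degree at least that of every private neighbour of the lower-degree vertex, and from this (plus $|N(v_s)|\ge|N(v_t)|$) one can deduce $\Sigma_s\ge\Sigma_t$; but then you must also handle the equal-degree case $d_s=d_t$ (where the switch yields no contradiction and you must instead choose which of the two equal-degree vertices to increment) and the connectivity of the switched graph. None of this is in your sketch.

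The paper avoids this issue by not arguing on an arbitrary optimal graph at all. When $d_2\ge3$ and $d_n=1$ it works with the explicit BFS-ordered optimal graph $B_M^*(\pi)$ guaranteed by Theorem~\ref{main}(5), reads off both $d(v_p)\ge d(v_q)$ and the neighbour-sum inequality from the BFS structure established in the proof of Theorem~\ref{general}, and also gets the admissible vertex $w$ for free (Lemma~\ref{d2>2}); for all remaining bicyclic sequences ($d_n=2$, or $d_2=2$ with pendant paths) it simply compares the closed-form values of $M_2$ computed in Theorem~\ref{main}(1)--(4) by exhaustion (Lemma~\ref{d2=2}). So your fallback of ``reading off $w$ from the explicit optimal graphs'' is in fact the load-bearing part of the argument and should be promoted from a contingency to the main line. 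Finally, note that both you and the paper leave unverified that the intermediate sequences in the majorization chain can be chosen to be bicyclic graphic (all entries positive and connected-realizable with $n+1$ edges); you at least flag this, and it does need the check you describe.
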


\section{Proof of Theorem \ref{general}}
To prove the theorem, the following lemmas are needed.

\begin{lemma}\label{trans1}(\cite{Liu2012})
Let $G=(V,E)$ be a connected graph with $v_1u_1\in E, v_2u_2\in E$, $v_1v_2\notin E$ and $u_1u_2\notin E$. Let $G'=G-u_1v_1-u_2v_2+v_1v_2+u_1u_2$.
If $d(v_1)\geq d(u_2)$ and $d(v_2)\geq d(u_1)$, then $M_2(G')\geq M_2(G)$. Moreover, $M_2(G')>M_2(G)$ if and only if both two inequalities are strict.
\end{lemma}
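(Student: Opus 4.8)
The plan is to exploit the fact that the passage from $G$ to $G'$ is a $2$-switch: it deletes the two edges $u_1v_1$ and $u_2v_2$ and inserts the two edges $v_1v_2$ and $u_1u_2$. First I would confirm that $G'$ is again a simple graph; this is exactly what the hypotheses provide, since $u_1v_1,u_2v_2\in E$ guarantee the deleted edges really exist while $v_1v_2\notin E$ and $u_1u_2\notin E$ guarantee the inserted edges are genuinely new. The crucial structural observation is that each of the four vertices $v_1,u_1,v_2,u_2$ loses exactly one incident edge and gains exactly one incident edge, so every vertex keeps the same degree in $G'$ as in $G$; in particular $G$ and $G'$ have the same degree sequence.

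Because $M_2$ is a sum over edges of the product of the endpoint degrees, and because no degree changes under the switch, every edge common to $G$ and $G'$ contributes the same amount to $M_2(G)$ and to $M_2(G')$. Hence the entire change in the index is localized to the two deleted and the two inserted edges, and I would record
\[
M_2(G')-M_2(G)=\bigl(d(v_1)d(v_2)+d(u_1)d(u_2)\bigr)-\bigl(d(v_1)d(u_1)+d(v_2)d(u_2)\bigr),
\]
where each degree may be computed in $G$ (equivalently in $G'$, since the two agree).

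The key algebraic step is then to factor the right-hand side. Regrouping the four terms yields
\[
M_2(G')-M_2(G)=\bigl(d(v_1)-d(u_2)\bigr)\bigl(d(v_2)-d(u_1)\bigr),
\]
and everything follows from this single identity. The hypotheses $d(v_1)\ge d(u_2)$ and $d(v_2)\ge d(u_1)$ make both factors nonnegative, so the product is nonnegative and $M_2(G')\ge M_2(G)$. For the ``moreover'' clause I would simply note that a product of two nonnegative reals is strictly positive if and only if both factors are strictly positive, which is precisely the statement that both inequalities are strict.

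I do not anticipate a serious obstacle: the whole argument rests on the one fact that a $2$-switch preserves all vertex degrees, after which $M_2$ can change only through the four affected edges. The only point needing a little care is the bookkeeping of added versus removed edges and the possibility that two of the four vertices coincide; such degenerate configurations merely force one of the two factors to vanish, which is harmless and fully consistent with the claimed identity. The factorization is the one genuinely clever move, and its value is that it delivers the inequality and its equality characterization simultaneously.
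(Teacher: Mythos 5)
Your proof is correct: the degree-preserving $2$-switch localizes the change in $M_2$ to the four affected edges, and the factorization $M_2(G')-M_2(G)=\bigl(d(v_1)-d(u_2)\bigr)\bigl(d(v_2)-d(u_1)\bigr)$ immediately gives both the inequality and the equality characterization (note also that the hypotheses $v_1u_1,v_2u_2\in E$ and $v_1v_2,u_1u_2\notin E$ in fact force the four vertices to be distinct, so the degenerate cases you worry about cannot occur). The paper states this lemma as a quoted result from \cite{Liu2012} without reproducing a proof, and your argument is precisely the standard one used there.
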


\begin{lemma}\label{trans2}(\cite{Liu2012})
Suppose $G\in \Gamma(\pi)$, and there exist three vertices u, v, w of a connected graph G such that $uv\in E(G), uw\notin E(G), d(v)<d(w)\leq d(u)$,
and $d(u)>d(x)$ for all $x\in N(w)$. Then, there exists another connected graph $G'\in \Gamma(\pi)$ such that $M_2(G)<M_2(G')$.
\end{lemma}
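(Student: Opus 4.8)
The plan is to produce $G'$ by a single degree-preserving edge rotation (a $2$-switch) on a carefully chosen $4$-tuple of vertices, arranged so that the rotation both strictly raises $M_2$ and keeps the graph connected. Concretely, I would pick a neighbor $x$ of $w$ and set
\[
G' = G - uv - wx + uw + vx .
\]
Every vertex keeps its degree under this operation, since $u$ trades $v$ for $w$, $w$ trades $x$ for $u$, $v$ trades $u$ for $x$, and $x$ trades $w$ for $v$; hence $G'$ realizes $\pi$ provided it is simple and connected. Only the four affected edges change their contribution to the Zagreb sum, so
\[
M_2(G') - M_2(G) = \bigl(d(u)-d(x)\bigr)\bigl(d(w)-d(v)\bigr),
\]
and since $x\in N(w)$ forces $d(u)>d(x)$ while the hypothesis gives $d(w)>d(v)$, this difference is positive. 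Equivalently, one may invoke Lemma~\ref{trans1} with $(v_1,u_1,v_2,u_2)=(u,v,w,x)$: its two required inequalities are exactly $d(u)\ge d(x)$ and $d(w)\ge d(v)$, both strict here, which yields $M_2(G')>M_2(G)$ at once.

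Two legitimacy conditions remain: $G'$ must be simple (so $x\ne v$ and $vx\notin E$) and $G'$ must be connected. For simplicity I would first note $d(w)>d(v)\ge 1$, so $w$ has at least two neighbors and we may insist $x\ne v$, while $x\ne u$ holds automatically because $u\notin N(w)$. To find such an $x$ with $vx\notin E$, the degree hypothesis does the work: if every neighbor $x\ne v$ of $w$ had $vx\in E$, then $N(w)\setminus\{v\}\subseteq N(v)$; but $u\in N(v)$ (from $uv\in E$) and $u\notin N(w)$ (from $uw\notin E$), so $N(v)$ would contain the $\ge d(w)-1$ vertices of $N(w)\setminus\{v\}$ together with the further vertex $u$ lying outside $N(w)$, forcing $d(v)\ge d(w)$ and contradicting $d(v)<d(w)$. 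Thus a simplicity-admissible $x$ always exists.

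I expect connectivity to be the main obstacle, since a $2$-switch can in principle split a connected graph. I would control it through the auxiliary graph $H=G-uv-wx$. Because $G$ is connected and only two edges are removed, $H$ has at most three components, and every component of $H$ must contain one of $u,v,w,x$: any component avoiding all four would have had all its $G$-edges to the outside lying in $\{uv,wx\}$, which is impossible as those edges have no endpoint outside $\{u,v,w,x\}$. Now $G'=H+uw+vx$ merges the components of $u$ and $w$ and those of $v$ and $x$, so $G'$ is connected as soon as the $\{u,w\}$-side and the $\{v,x\}$-side lie in a common component of $H$ — for example whenever $u$ and $x$, or $v$ and $w$, are joined in $H$. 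Securing this linking property simultaneously with simplicity is the one step needing genuine care: I would choose the admissible $x$ to lie in the component of $u$ in $H$ (e.g.\ by routing $x$ as the last internal vertex of a $u$--$w$ path of $G$ that avoids the edge $uv$), and handle the degenerate case $vw\in E$ separately, where the surviving edge $vw$ already bridges the two sides and connectivity is immediate. Once such an $x$ is shown to be available under the stated local degree conditions, the degree-preservation and the strict $M_2$-increase follow directly from the computation above.
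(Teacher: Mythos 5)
The paper never actually proves this lemma---it is imported from \cite{Liu2012}, where it is established for \emph{trees}---so there is no in-paper argument to measure you against; your proposal has to stand on its own. Most of it does: the switch $G'=G-uv-wx+uw+vx$ preserves all degrees, the identity $M_2(G')-M_2(G)=\bigl(d(u)-d(x)\bigr)\bigl(d(w)-d(v)\bigr)>0$ is correct (and is exactly an application of Lemma~\ref{trans1} with both inequalities strict), and your counting argument that some $x\in N(w)$ with $x\neq u,v$ and $vx\notin E$ must exist (otherwise $N(v)\supseteq (N(w)\setminus\{v\})\cup\{u\}$ forces $d(v)\ge d(w)$) is sound. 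This is the standard route for such lemmas.

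The genuine gap is the one you yourself flag and then do not close: you never prove that an $x$ satisfying the simplicity requirement $vx\notin E$ can be chosen so that $G'$ remains connected. Your proposed recipe---take $x$ to be the last internal vertex of a $u$--$w$ path avoiding the edge $uv$---is not guaranteed to satisfy $vx\notin E$, and conversely the counting argument that produces an $x$ with $vx\notin E$ says nothing about where that $x$ sits relative to the cut structure of $G-uv-wx$; for a single candidate the two requirements can genuinely conflict, so ending with ``once such an $x$ is shown to be available'' leaves the key existence claim unproved. The gap is closable, but it takes a real argument, e.g.: note first that $G'$ is disconnected exactly when $\{uv,wx\}$ separates $\{u,w\}$ from $\{v,x\}$ in $G$. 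If this happened for two distinct admissible $x_0,x_1$, comparing the two vertex sets reachable from $\{u,w\}$ shows that $uv$ is a bridge whose $v$-side $Z$ satisfies $N(w)\cap Z=\emptyset$ (so in fact \emph{every} $x\in N(w)$ is admissible) and that every edge $wx$ is a bridge; but then the neighbour $x^*$ of $w$ lying in the component of $u$ in $G-w$ gives a connected $G'$, a contradiction. The degenerate case where only one admissible $x$ exists is handled similarly (its failure would force $d(w)=1$). Without some argument of this kind the proof is incomplete.
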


\begin{lemma}\label{neighbor}(\cite{Liu2014})
For any graphic sequence $\pi$ with $n\geq 3$, there exists an optimal graph $G\in \Gamma(\pi)$
such that $\{v_2,v_3\}\subseteq N(v_1)$.
\end{lemma}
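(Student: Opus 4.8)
The plan is to start from an arbitrary optimal graph $G\in\Gamma(\pi)$ and, whenever $v_1$ fails to be adjacent to $v_2$ or to $v_3$, repair this with a single degree-preserving $2$-switch of the type in Lemma~\ref{trans1}; since $G$ is optimal and such a switch never decreases the index, the inequality $M_2(G')\ge M_2(G)$ must in fact be an equality, so $G'$ is again optimal. The whole argument hinges on one pleasant consequence of the labelling: because $d(v_1)=d_1$ is the largest degree and $d(v_2)=d_2$ the second largest, the degree hypotheses of Lemma~\ref{trans1} are automatic. Indeed, for any $u_2\in N(v_2)$ we have $d(v_1)\ge d(u_2)$, and for any $u_1\in N(v_1)$ (so $u_1\ne v_1$, hence $u_1=v_j$ with $j\ge 2$) we have $d(u_1)\le d_2=d(v_2)$. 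Thus the only thing left to arrange is a pair of \emph{non-incident} endpoints.

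First I would make $v_2\in N(v_1)$. Suppose $v_1\not\sim v_2$. I claim there always exist $u_1\in N(v_1)$ and $u_2\in N(v_2)$ with $u_1\ne u_2$ and $u_1u_2\notin E$; the switch $G'=G-v_1u_1-v_2u_2+v_1v_2+u_1u_2$ then produces an optimal graph containing the edge $v_1v_2$. To produce such a pair I argue by contradiction: if every $u_1\in N(v_1)$ and $u_2\in N(v_2)$ were equal or adjacent, then any $u_1\in N(v_1)\setminus N(v_2)$ would be adjacent to all of $N(v_2)$ together with $v_1\notin N(v_2)$, forcing $d(u_1)\ge d_2+1>d_2$, impossible for a vertex $\ne v_1$. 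Hence $N(v_1)\subseteq N(v_2)$, whence $d_1\le d_2$ and so $N(v_1)=N(v_2)=:S$; but then $S$ is a clique (any two distinct members lie in $N(v_1)$ and $N(v_2)$), so every $s\in S$ has degree at least $|S|-1+2=d_1+1>d_1$, again impossible. Thus maximality of $d(v_1)$ alone forces the desired pair.

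Next I would make $v_3\in N(v_1)$ without destroying $v_1v_2$. Now assume $v_1\sim v_2$ but $v_1\not\sim v_3$, and look for $u_1\in N(v_1)\setminus\{v_2\}$ and $u_2\in N(v_3)$ with $u_1\ne u_2$, $u_1u_2\notin E$. Excluding $v_2$ keeps $v_1v_2$ intact, and since such $u_1\notin\{v_1,v_2\}$ gives $d(u_1)\le d_3=d(v_3)$, Lemma~\ref{trans1} applies with roles $(v_1,v_3)$. The same saturation argument shows $N(v_1)\setminus\{v_2\}\subseteq N(v_3)$ and that this set is a clique, and pushing the degree count one step further forces $d_1=d_2=d_3$ while pinning down every neighbourhood exactly. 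The main obstacle is exactly this degenerate regime $d_1=d_2=d_3$, where the removed endpoint $v_2$ genuinely interferes; I would finish by a short case split on the unique extra neighbour $z$ of $v_3$ lying outside $N(v_1)\setminus\{v_2\}$. If $z=v_2$, then for any $a\in N(v_1)\setminus\{v_2\}$ the pair $(a,v_2)$ works, since the exact neighbourhood description shows $a\not\sim v_2$; if $z\ne v_2$, then $z$ would be forced adjacent to all of $N(v_1)\setminus\{v_2\}$, contradicting that exact description. Either way a valid switch exists, producing an optimal graph with $\{v_2,v_3\}\subseteq N(v_1)$. (Lemma~\ref{trans2} is available as a backup for ruling out the borderline configurations, but the degree-counting contradictions above appear to suffice.)
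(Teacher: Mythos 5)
The paper never proves this lemma --- it is imported verbatim from \cite{Liu2014} --- so there is no in-paper argument to compare yours against; I can only assess your reconstruction on its own terms. Its combinatorial core is sound: the degree hypotheses of Lemma~\ref{trans1} are indeed automatic for the roles you assign (any $u_1\in N(v_1)$ other than $v_1$ has degree at most $d_2$, and any $u_1\in N(v_1)\setminus\{v_2\}$ is some $v_j$ with $j\ge 4$, hence has degree at most $d_3$), and your saturation/degree-counting arguments showing that a non-adjacent pair $u_1\in N(v_1)$, $u_2\in N(v_2)$ (respectively $u_2\in N(v_3)$) must exist are correct, including the careful handling of the degenerate regime $d_1=d_2=d_3$ via the extra neighbour $z$ of $v_3$.

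The genuine gap is connectivity. $\Gamma(\pi)$ consists of \emph{connected} graphs, and the $2$-switch $G'=G-v_1u_1-v_2u_2+v_1v_2+u_1u_2$ can disconnect the graph; Lemma~\ref{trans1} only guarantees $M_2(G')\ge M_2(G)$, not $G'\in\Gamma(\pi)$, so the conclusion ``$G'$ is again optimal'' does not follow. Concretely, let $G$ be the $6$-cycle with vertices $v_1,a,v_2,b,c,d$ in cyclic order, so that $v_1\not\sim v_2$, and choose $u_1=d\in N(v_1)$, $u_2=b\in N(v_2)$: this pair satisfies all of your requirements ($u_1\ne u_2$, $u_1u_2\notin E$), yet the switch produces the two disjoint triangles $v_1av_2$ and $bcd$. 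Your argument only establishes that \emph{some} admissible pair exists and gives no control over which one is selected (in this example two of the three admissible pairs work and one does not). To close the gap you must either show that among the admissible pairs at least one preserves connectivity, or route the switch along explicit structure --- an edge lying on a cycle, or an edge leading to a pendant path --- so that connectedness of $G'$ is evident, which is exactly how the paper's own switching arguments (e.g.\ Claim 1 in the proof of Lemma~\ref{i=3}) are organized. The same issue recurs in your second step, where $v_3$ is attached to $v_1$.
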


\begin{lemma}\label{i=3}
Let $\pi$ be a graphic sequence satisfying the conditions in Theorem \ref{general}.
Then there is an optimal graph $G\in\Gamma(\pi)$ such that $v_1v_2v_3$ forms a triangle.
\end{lemma}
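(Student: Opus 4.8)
The plan is to begin from the optimal graph supplied by Lemma~\ref{neighbor} and then \emph{create} the missing edge $v_2v_3$ by a single degree-preserving $2$-switch of the form treated in Lemma~\ref{trans1}. Thus Lemma~\ref{neighbor} gives an optimal $G\in\Gamma(\pi)$ with $\{v_2,v_3\}\subseteq N(v_1)$. If already $v_2v_3\in E(G)$, then $v_1v_2v_3$ is a triangle and we are done, so assume $v_2v_3\notin E(G)$. We may also assume $c\ge 0$, since for $c=-1$ the graph is a tree, contains no triangle, and the claim carries no content.

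Before switching I would record two facts. Since $c\ge 0$ the edge count $n+c$ is at least $n$, so the connected graph $G$ contains a cycle; every cycle has at least three vertices of degree $\ge 2$, whence $d_1\ge d_2\ge d_3\ge 2$. In particular both $v_2$ and $v_3$ have a neighbour other than $v_1$. Moreover, because $v_2v_3\notin E(G)$, no vertex of $N(v_2)\setminus\{v_1\}$ or of $N(v_3)\setminus\{v_1\}$ can be $v_1,v_2$ or $v_3$, so each such vertex is some $v_j$ with $j\ge 4$ and therefore has degree $d_j\le d_4\le\min\{d_2,d_3\}$.

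The main step is the switch itself. Pick $u_1\in N(v_2)\setminus\{v_1\}$ and $u_2\in N(v_3)\setminus\{v_1\}$ with $u_1\neq u_2$ and $u_1u_2\notin E(G)$, and set $G'=G-v_2u_1-v_3u_2+v_2v_3+u_1u_2$. By the previous paragraph $d(v_2)\ge d(u_2)$ and $d(v_3)\ge d(u_1)$, so applying Lemma~\ref{trans1} with its roles assigned as $v_1\mapsto v_2$, $v_2\mapsto v_3$ gives $M_2(G')\ge M_2(G)=M_2(\pi)$. Optimality of $G$ forces equality; $G'$ has the same degree sequence, and since the edges $v_1v_2,v_1v_3$ are untouched while $v_2v_3$ has been added, $G'$ contains the triangle $v_1v_2v_3$. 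Hence $G'$ is the desired optimal graph.

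Two points need care. The first is the existence of an admissible pair $(u_1,u_2)$: the construction stalls only when every $u_1\in N(v_2)\setminus\{v_1\}$ and $u_2\in N(v_3)\setminus\{v_1\}$ satisfy $u_1=u_2$ or $u_1u_2\in E(G)$, that is, when $v_2$ and $v_3$ share all their non-$v_1$ neighbours or those neighbourhoods are joined into a clique; here I would instead rotate a common neighbour $w\in N(v_2)\cap N(v_3)$ by switching on $v_2u_1,v_3w$, or invoke Lemma~\ref{trans2} to contradict optimality, reducing matters to the generic case. The second and harder point, which I expect to be the real obstacle, is connectivity: deleting $v_2u_1$ and $v_3u_2$ can disconnect $G$ precisely when both are bridges severing $u_1$ and $u_2$ from the component of $v_1$, and then the new edge $u_1u_2$ fails to reconnect them. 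To rule this out I would use that $G$ carries $c+1\ge 1$ independent cycles to select $u_1$ (or $u_2$) as a cycle-neighbour of $v_2$ (resp.\ $v_3$), making at least one deleted edge a non-bridge, so that $u_1u_2$ restores connectivity; when no such clean choice exists I would follow the switch by one further $M_2$-preserving switch that reattaches the severed piece to $v_1$ while leaving the triangle intact, and the delicate part is to verify, using the leaf guaranteed by $d_n=1$, that such a repairing switch is always available.
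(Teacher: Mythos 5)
Your overall strategy --- start from the optimal graph of Lemma~\ref{neighbor} with $\{v_2,v_3\}\subseteq N(v_1)$ and then manufacture the edge $v_2v_3$ by a single application of Lemma~\ref{trans1} --- coincides with the \emph{last} step of the paper's proof, and your degree bookkeeping is sound: since $v_2v_3\notin E(G)$, any $u_1\in N(v_2)\setminus\{v_1\}$ and $u_2\in N(v_3)\setminus\{v_1\}$ are among $v_4,\ldots,v_n$, so $d(u_1)\le d(v_3)$ and $d(u_2)\le d(v_2)$. The gap is that you correctly identify the two real difficulties --- existence of an admissible pair $(u_1,u_2)$ and preservation of connectivity --- and then resolve neither. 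Connectivity is the fatal one: if $u_1$ is attached to the rest of $G$ only through $v_2$ and $u_2$ only through $v_3$ (say both are roots of pendant subtrees), then $G-v_2u_1-v_3u_2+v_2v_3+u_1u_2$ severs those subtrees from $v_1$'s component, so $G'\notin\Gamma(\pi)$ and Lemma~\ref{trans1} buys you nothing. Your remedy is to pick $u_1$ as a ``cycle-neighbour'' of $v_2$, but at this stage nothing guarantees that $v_2$ or $v_3$ lies on any cycle: Lemma~\ref{neighbor} only gives adjacency to $v_1$. Your fallback (``one further $M_2$-preserving switch that reattaches the severed piece'') is exactly the delicate point you admit you have not verified, and it is not obviously available.

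Establishing the cycle structure you would need is in fact the bulk of the paper's argument. Its proof proceeds through three claims: first an optimal graph in which $v_1$ lies on a cycle, then one in which the edge $v_1v_2$ lies on a cycle, then one in which $v_1v_2$ and $v_1v_3$ lie on a common cycle $C_{t_1}$; each step is a chain of switches justified by Lemma~\ref{trans1} together with a descent along a path toward a leaf (this is where the hypothesis $d_n=1$ is genuinely used, not merely to supply a ``repairing'' edge). Only after that preparation is the final switch safe: the deleted edge $v_3v$ can be taken on $C_{t_1}$, hence is not a bridge, and the degenerate case in which every vertex of $N(v_2)\setminus\{v_1\}$ is joined to $v$ --- your admissibility obstruction --- is then disposed of using off-cycle neighbours of $v_2$ and $v_3$ (available because $d_2\ge 3$ and $d(v_3)\ge 3$ in that case). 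So your proposal reproduces the easy concluding step but omits the preparatory Claims that make it legitimate; as written it is incomplete precisely where the paper does its real work.
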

\begin{proof}
To prove Lemma \ref{i=3}, we need to prove following claims first.

{\bf Claim 1.} There is an optimal graph $G\in \Gamma(\pi)$ such that $\{v_2,v_3\}\subseteq N(v_1)$ and there exists a cycle $C_{t_1}\subseteq G$ such that
$v_1\in C_{t_1}$.

Assume that Claim 1 does not hold for any optimal graph $G\in \Gamma(\pi)$. By Lemma \ref{neighbor}, we may suppose that G is an optimal graph in
$\Gamma(\pi)$ such that $\{v_2,v_3\}\subseteq N(v_1)$. So $v_1$ is not in any cycle of any optimal graph $G\in \Gamma(\pi)$. Since $d_1\geq c+2$,
there exists a shortest path $P=u\cdots v_1 \cdots xy$ connecting $u$ and $y$ such that $v_1$ is on the path,
where $u\in C_{t_1}$ and $d(y)=1$, $x\in P, x\in N(y)$. Suppose $w\in N(u)\bigcap V(C_{t_1})$.

If $d(w)\leq d(x)$, let $G_1=G+ux+wy-wu-xy$. By Lemma \ref{trans1} $M_2(G_1)\geq M_2(G)$. Note that $G_1\in \Gamma(\pi)$, $v_1$ is in some cycle of $G_1$
and ${v_2,v_3}\subseteq N(v_1)$, a contradiction. If $d(u)\leq d(x)$, let $G_2=G+wx+uy-wu-xy$. By Lemma \ref{trans1} $M_2(G_2)\geq M_2(G)$. For the same
reason, it's a contradiction. Thus, $min\{d(u), d(w)\}>d(x)$.

Then take $z\in (N(x)\bigcap V(P))\backslash \{y\}$. Similarly, $min\{d(u), d(w)\}>d(z)$. It can be proved that $min\{d(u), d(w)\}>d(v_1)$
by repeating this process, which is a contradiction. Thus, Claim 1 holds.

{\bf Claim 2.}  There is an optimal graph $G\in \Gamma(\pi)$ such that there exists a cycle $C_{t_1}\subseteq G$ which contains $v_1v_2$ and $v_3\in N(v_1)$.

Assume that Claim 2 does not hold for any optimal graph $G\in \Gamma(\pi)$. By Claim 1, there exists an optimal graph $G\in \Gamma(\pi)$ such that
$v_1\in V(C_{t_1})$ and $\{v_2,v_3\}\subseteq N(v_1)$. Then $v_2\notin V(C_{t_1})$, and there are two cases for $v_1$ and $v_2$.

Case 1. There is a shortest path $P=v_1v_2xy\cdots z$ connecting $v_1$ and $z$ such that $v_2$ is on the path P, where $d(z)=1$.
Choose $\{u,v\}\subseteq V(C_{t_1})$ such that $uv\in E(C_{t_1})$ and suppose $max\{d(u),d(v)\}=d(u)$. If $d(u)\geq d(x)$, let $G'=G+uv_2+vx-uv-v_2x$.
By Lemma \ref{trans1}, $M_2(G')\geq M_2(G)$ and note that $G'\in \Gamma(\pi)$ and Claim 2 holds for $G'$, a contradiction. Thus $max\{d(u),d(v)\}=d(u)<d(x)$.
Repeating the above process, we can conclude $d(u)<d(z)=1$, a contradiction. So case 1 does not hold.

Case 2. There is not any path connecting $v_1$ and $z$ such that $v_2$ is on the path, where $z$ is the arbitrary vertex in G and $d(z)=1$.
So it is obvious that $v_2$ is in another cycle $C_{t_2}$ of $G$ and $v_1\notin C_{t_2}$. Let $u_1\in N(v_1)\bigcap V(C_{t_1})$ and
$u_2\in N(v_2)\bigcap V(C_{t_2})$. By the definition of $v_1$, $v_2$, $d(v_1)\geq d(u_2)$, $d(v_2)\geq d(u_1)$. Let $G'=G-v_1u_1-v_2u_2+v_1v_2+u_1u_2$.
By Lemma \ref{trans1}, $M_2(G')\geq M_2(G)$ and note that $G'\in \Gamma(\pi)$. $v_1v_2$ is in the same cycle of $G'$, a contradiction.
So case 2 does not hold. Thus, Claim 2 holds.

{\bf Claim 3.} There is an optimal graph $G\in \Gamma(\pi)$ such that $\{v_1v_2, v_1v_3\}\subseteq E(C_{t_1})$.

By Claim 2, there is an optimal graph $G\in \Gamma(\pi)$ such that there exists a cycle $C_{t_1}\subseteq G$ which contains $v_1v_2$ and $v_3\in N(v_1)$.
If claim 3 does not hold, $v_3\notin V(C_{t_1})$, then $v_2v_3\notin E(G)$. Choose $u\in (V(C_{t_1})\bigcap N(v_2))\backslash \{v_1\}$ and
$v\in N(v_3)\backslash \{v_1\}$. If $uv\in E(G)$, let $C_{t_1}=v_1v_2uvv_3v_1$ and $\{v_1v_2, v_1v_3\}\subseteq E(C_{t_1})$, a contradiction.
So $uv\notin E(G)$. Let $G'=G+v_2v_3+uv-vv_3-uv_2$. By Lemma \ref{trans1}, $M_2(G')\geq M_2(G)$ and $G'\in \Gamma(\pi)$. Claim 3 holds for $G'$.

Thus, by Claim 3, there is an optimal graph $G\in \Gamma(\pi)$ such that
$\{v_1v_2, v_1v_3\}\subseteq E(C_{t_1})$. If $v_2v_3\notin E(G)$, choose $v\in (N(v_3)\bigcap V(C_{t_1}))\backslash \{v_1\}$. Because $d_2\geq 3$,
there are two cases for the vertices in $N(v_2)$.

Case 1. There is $u\in N(v_2)\backslash V(C_{t_1})$ such that $uv\notin E$. Let $G'=G+v_2v_3+uv-uv_2-vv_3$.
By Lemma \ref{trans1}, $M_2(G')\geq M_2(G)$ and $G'\in \Gamma(\pi)$. Since $v_1v_2v_3$ forms a triangle in $G'$, Lemma \ref{i=3} holds.

Case 2. All vertices in $N(v_2)\backslash v_1$ connect with $v$. So $d(v)\ge 3$. Then $d(v_3)\ge 3$. We can choose $u\in N(v_2)\backslash V(C_{t_1})$ and
$v'\in N(v_3)\backslash V(C_{t_1})$. Let $G'=G+v_2v_3+uv'-v_2u-v_3v'$. By Lemma \ref{trans1}, $M_2(G')\geq M_2(G)$ and $G'\in \Gamma(\pi)$.
Since $v_1v_2v_3$ forms a triangle in $G'$, Lemma \ref{i=3} is proved.
\end{proof}

\begin{lemma}\label{v1vi}
Let $\pi$ be a graphic sequence satisfying the conditions in Theorem \ref{general}.
G is an optimal graph in $\Gamma(\pi)$. If $v_1v_2v_3$, $v_1v_2v_4$, $\ldots$, $v_1v_2v_{i-1}$ form $i-3$ triangles in G, where $4\le i\le c+2$,
there is an optimal graph $G'$ (isomorphic or not isomorphic to G) in $\Gamma(\pi)$ such that
$v_1'v_2'v_3'$, $v_1'v_2'v_4'$, $\ldots$, $v_1'v_2'v_{i-1}'$ form $i-3$ triangles in $G'$ and $v_1'v_i'\in E(G')$.
\end{lemma}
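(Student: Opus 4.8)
The plan is to obtain $G'$ from $G$ by a single $2$-switch governed by Lemma~\ref{trans1}, supplemented by a relabelling move that absorbs ties in $\pi$. If $v_1v_i\in E(G)$ there is nothing to do (take $G'=G$), so assume throughout that $v_1v_i\notin E(G)$.

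First I would extract a cheaply removable neighbour of $v_1$. The $i-3$ triangles pin the edges $v_1v_2,v_1v_3,\dots,v_1v_{i-1}$, so $\{v_2,\dots,v_{i-1}\}\subseteq N(v_1)$; and since $i\le c+2$, condition~(ii) of Theorem~\ref{general} gives $d(v_1)=d_1\ge d_2\ge c+2\ge i$. Hence $S:=N(v_1)\setminus\{v_2,\dots,v_{i-1}\}$ has $d_1-(i-2)\ge 2$ elements. As $v_1\notin N(v_1)$ and, by assumption, $v_i\notin N(v_1)$, every vertex of $S$ has index strictly larger than $i$ and therefore degree $\le d_{i+1}\le d_i=d(v_i)$; moreover $d_i=d_{c+3}\ge 2$ by condition~(iii) and the definition of $G_M^*(\pi)$, so $v_i$ has a second neighbour available for switching. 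Fix any $u_1\in S$.

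The argument then splits on $d(u_1)$. If $d(u_1)=d_i$, then $d_i=d_{i+1}=\dots=d(u_1)$, so the transposition interchanging the labels $v_i$ and $u_1$ preserves the non-increasing sequence $\pi$; it produces a graph $G'\cong G$, hence optimal, in which the vertex now labelled $v_i$ is the old $u_1\in N(v_1)$. Thus $v_1v_i\in E(G')$, while the $i-3$ triangles, which live on indices $<i$, are untouched. If instead $d(u_1)<d_i$, I would choose $u_2\in N(v_i)\setminus\{u_1\}$ with $u_1u_2\notin E$ and set $G'=G-v_1u_1-v_iu_2+v_1v_i+u_1u_2$. Taking the four vertices of Lemma~\ref{trans1} to be $v_1,v_i,u_1,u_2$ (in the roles of its $v_1,v_2,u_1,u_2$), its hypotheses hold: $d(v_1)=d_1$ is maximal and $d(v_i)\ge d(u_1)$. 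Therefore $M_2(G')\ge M_2(G)$, so $G'$ is again optimal; since neither $u_1$ nor $v_i$ lies in $\{v_2,\dots,v_{i-1}\}$, deleting $v_1u_1$ and $v_iu_2$ breaks no triangle, and the required edge $v_1v_i$ has been created. That $G'$ stays connected, i.e.\ $G'\in\Gamma(\pi)$, is verified exactly as in Lemma~\ref{i=3}.

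The crux — and the only genuinely nontrivial step — is guaranteeing the switching partner $u_2$ when $d(u_1)<d_i$, i.e.\ ruling out $N(v_i)\setminus\{u_1\}\subseteq N(u_1)$. I expect to settle this by a short degree count: that inclusion would force $d(u_1)\ge |N(v_i)\setminus\{u_1\}|\ge d_i-1$, whence $d(u_1)=d_i-1$, $u_1\in N(v_i)$, and $N(u_1)=N(v_i)\setminus\{u_1\}$; but $u_1\in S\subseteq N(v_1)$ gives $v_1\in N(u_1)$, so $v_1\in N(v_i)$, contradicting $v_1v_i\notin E(G)$. Hence a valid $u_2$ always exists and the $2$-switch goes through, which completes the proof.
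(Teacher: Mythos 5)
Your route is genuinely different from, and much shorter than, the paper's. The paper first applies Lemma \ref{trans2} to produce a neighbour $u=v_j$ of $v_i$ with $d(u)=d_1$, and then runs a three-way case analysis on where $v_j$ sits relative to $\{v_2,\ldots,v_{i-1}\}$, including a multi-edge exchange in its Case 3.2. You avoid all of this with the observation that any $u_1\in N(v_1)\setminus\{v_2,\ldots,v_{i-1}\}$ is some $v_k$ with $k>i$ (because $v_i\notin N(v_1)$), so $d(u_1)\le d_i$ comes for free; ties are absorbed by relabelling, and otherwise the single switch $G-v_1u_1-v_iu_2+v_1v_i+u_1u_2$ meets the hypotheses of Lemma \ref{trans1} because $d(v_1)=d_1\ge d(u_2)$ holds trivially and $d(v_i)\ge d(u_1)$ by the index bound. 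Your counting argument for the existence of $u_2$ is correct (and the remark that $d_i\ge 2$ is not even needed there, since $d(u_1)<d_i$ already forces it), and the listed triangles are indeed untouched by the switch.

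However, there is one genuine gap: you never establish $G'\in\Gamma(\pi)$, i.e.\ that $G'$ is connected, and Lemma \ref{trans1} does not give this. The switch disconnects $G$ exactly when $\{v_1u_1,v_iu_2\}$ is an edge cut separating $\{v_1,v_i\}$ from $\{u_1,u_2\}$, and with your unconstrained choices this genuinely happens: if $u_1$ is a pendant neighbour of $v_1$ and $u_2$ a pendant neighbour of $v_i$ (both perfectly admissible, since then $d(u_1)=1<d_i$ and $u_1u_2\notin E$ automatically), the switch turns $\{u_1,u_2\}$ into an isolated $K_2$ component, so $G'\notin\Gamma(\pi)$ and no conclusion about $M_2(\pi)$ can be drawn from $M_2(G')\ge M_2(G)$. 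The appeal to Lemma \ref{i=3} does not repair this, since that proof also merely asserts connectedness. To close the gap you must constrain the choice of $u_1$ and $u_2$ --- for instance, show that if $\{v_1u_1,v_iu_2\}$ is such a cut then some other neighbour of $v_i$ (which necessarily lies on the $v_1$-side and is non-adjacent to $u_1$) can serve as $u_2$, and verify the new switch preserves connectivity. As it stands, the single switch your whole proof rests on can leave $\Gamma(\pi)$.
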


\begin{proof}
If $v_1v_i\notin E(G)$, $\forall v\in N(v_1)\backslash \{v_2,\ldots,v_{i-1}\}, d(v)<d(v_i)$ otherwise we may exchange the label of $v$ and $v_i$.
Then by Lemma \ref{trans2} we may assume there exists $u\in N(v_i)$ such that $d(u)=d_1$. Suppose $u=v_j$, then $d_1=d_2=\cdots=d_j$.
There are three cases for $u=v_j$:

Case 1. $u=v_2$. The result holds after exchanging the label of $v_1$ and $v_2$.

Case 2. $u\notin \{v_2,v_3,\ldots v_{i-1}\}$, i.e. $j>i$. Then $d_1=d_2=d_i=d_j$. Let $P$ be a shortest path from $v_1$ to $v_i$.

If $\{v_2,\ldots,v_{i-1}\}\bigcap V(P)=\emptyset$, choose $x\in N(v_1)\bigcap V(P)$. Since $v_1\in N(x)\backslash v_i$ and $d_1=d_i\geq d(x)$, there must exist some vertex
$y\in N(v_i)\backslash V(P)$ such that $y\notin N(x)$. Let $G'=G+v_1v_i+xy-v_1x-v_iy$. By Lemma \ref{trans1} $M_2(G')\geq M_2(G)$. Note that
$G'\in \Gamma(\pi)$ and $v_1v_i\in E(G')$, the result holds.

If $\{v_2,\ldots,v_{i-1}\}\bigcap V(P)\neq\emptyset$, it can be proved similarly.

Case 3. $u\in \{v_2,\ldots v_{i-1}\}$, i.e. $j<i$. 
Denote set $S=N(v_1)\backslash \{v_2,\ldots,v_{i-1},N(v_j)\}$,

Case 3.1. $S\neq \emptyset$, choose $w\in S$. Note that $d(v_i)\geq d(w)$ and $d(v_1)\geq d(u)$. Let $G'=G+v_1v_i+v_jw-v_1w-v_iv_j$.
Then $M_2(G')\geq M_2(G)$ by Lemma \ref{trans1} and $G'\in \Gamma(\pi)$, $v_1v_i\in E(G')$.

Case 3.2. $S=\emptyset$. Assume $U=\{v_3,v_4,\ldots,v_{j-1},v_{j+1},\ldots,v_{i-1}\}\backslash N(v_j)$ and $|U|=l>0$. Suppose
$U=\{v_{i_1},v_{i_2},\ldots,v_{i_l}\}$. Note that $U\subseteq N(v_1)$. Since $d_1=d_j$, there exists not less than $l$ vertices in
$N(v_j)\backslash N(v_1)$. Choose $l$ vertices $u_1,u_2,\ldots,u_l$ from $N(v_j)\backslash N(v_1)$.
Let $G'=G+v_{i_1}v_j+\cdots+v_{i_l}v_j-v_{i_1}v_2-\cdots-v_{i_l}v_2+u_1v_2+\cdots+u_lv_2-u_1v_j-\cdots-u_lv_j$. It can be concluded that
$M_2(G')\geq M_2(G)$ by using Lemma \ref{trans1} $l$ times. Then relabel $v_j$ as $v_1$, $v_1$ as $v_2$ and $v_2$ as $v_j$ in $G'$. $v_1v_i\in E(G')$.
If $|U|=0$, we can do the last step directly. Hence, $v_1v_i\in E(G')$.
\end{proof}

\begin{lemma}\label{introduction}
Let $\pi$ be a graphic sequence satisfying the conditions in Theorem \ref{general}.
Then there is an optimal graph  $G\in\Gamma(\pi)$ such that $v_1v_2v_3$, $\ldots$, $v_1v_2v_{c+3}$ form $c+1$ triangles.
\end{lemma}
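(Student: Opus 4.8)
The plan is to prove Lemma~\ref{introduction} by induction on the number of triangles sharing the edge $v_1v_2$, bootstrapping from the triangle $v_1v_2v_3$ guaranteed by Lemma~\ref{i=3}. Suppose we already have an optimal graph $G \in \Gamma(\pi)$ in which $v_1v_2v_3, \ldots, v_1v_2v_{i-1}$ form $i-3$ triangles, for some $4 \le i \le c+2$. By Lemma~\ref{v1vi}, we may assume (after possibly passing to an isomorphic relabeling) that $v_1v_i \in E(G)$ while retaining all the previously constructed triangles. The goal of the inductive step is then to upgrade the edge $v_1v_i$ into the triangle $v_1v_2v_i$, i.e.\ to produce $v_2v_i \in E(G')$ without destroying $v_1v_i$ or any of the earlier triangles, and without decreasing $M_2$.

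First I would record why $d_i \ge 2$ is available in the relevant range: condition $(iii)$ gives $d_4 = \cdots = d_{c+3}$ and together with $(ii)$ ($d_2 \ge c+2$) and $(i)$ this forces these common degrees to be at least $2$, so $v_i$ has a neighbor other than $v_1$. If $v_2v_i$ is already an edge we are done, so assume $v_2v_i \notin E(G)$. The core of the argument is a switching move: choose a neighbor $w \in N(v_i)\setminus\{v_1\}$ and a neighbor $u \in N(v_2)$ that lies outside the current triangle structure, and apply Lemma~\ref{trans1} to $G' = G + v_2v_i + uw - v_2u - v_iw$. The degree hypotheses $d(v_2) = d_2 \ge d(w)$ and $d(v_i) = d_i \ge d(u)$ needed to invoke Lemma~\ref{trans1} follow because $v_2$ and $v_i$ sit near the top of the degree order while $u,w$ are chosen to be vertices of smaller (or equal) degree; when the naive choice fails the degree comparison, Lemma~\ref{trans2} lets us first migrate a maximum-degree neighbor onto $v_i$, exactly as in the proof of Lemma~\ref{v1vi}. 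One then checks that the resulting $G'$ is connected, lies in $\Gamma(\pi)$, still contains $v_1v_i$ and the triangles $v_1v_2v_3,\ldots,v_1v_2v_{i-1}$, and now also contains $v_1v_2v_i$, completing the induction from $i-3$ to $i-2$ triangles. Iterating up to $i = c+3$ yields the $c+1$ triangles $v_1v_2v_3,\ldots,v_1v_2v_{c+3}$.

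The main obstacle I anticipate is the bookkeeping in the switching step: one must guarantee that the chosen endpoints $u$ and $w$ are distinct, that the four edges being added and removed are genuinely distinct, and above all that the switch neither breaks connectivity nor collapses one of the previously fixed triangles $v_1v_2v_j$ (for $3 \le j \le i-1$). Handling the case where every available neighbor of $v_2$ is already adjacent to every candidate $w$ — the analogue of Case~2 in Lemma~\ref{i=3} and Case~3.2 in Lemma~\ref{v1vi} — will require a more delicate multi-edge rearrangement, possibly a sequence of switches justified by applying Lemma~\ref{trans1} several times in succession, together with a relabeling of vertices of equal top degree. Because $\pi$ is fixed throughout and $M_2$ never decreases along these moves while the starting graph is optimal, each intermediate graph must itself be optimal, which is what lets the induction proceed inside the class of optimal graphs rather than merely producing a single target configuration.
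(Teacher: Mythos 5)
Your skeleton matches the paper's: induction with Lemma~\ref{i=3} as the base case and Lemma~\ref{v1vi} supplying the edge $v_1v_i$ at the start of each inductive step. But the inductive step itself --- turning the edge $v_1v_i$ into the triangle $v_1v_2v_i$ --- is where essentially all of the work lies, and your proposal only names the difficulties there without resolving them. The central gap is connectivity. Your single switch $G' = G + v_2v_i + uw - v_2u - v_iw$ preserves the degree sequence and, under the right degree comparisons, does not decrease $M_2$; but Lemma~\ref{trans1} gives no connectivity guarantee, and if $u$'s only access to the rest of the graph is through $v_2$ and $w$'s only access is through $v_i$, the switch strands the component containing $u$ and $w$. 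The paper's proof spends three of its four claims precisely on removing this obstruction: it first shows $v_1$ lies on a cycle $C_{t'}$ distinct from the already-built triangles (Claim~1), then that $v_1v_2 \in E(C_{t'})$ (Claim~2), then that $v_1v_2, v_1v_i \in E(C_{t'})$ (Claim~3), so that the final switch in Claim~4 can be performed using vertices \emph{on that cycle}, where the deleted edges are cycle edges and connectivity survives. Nothing in your proposal substitutes for this; ``one then checks that the resulting $G'$ is connected'' is exactly the check that fails for an arbitrary choice of $u$ and $w$.

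A second, related gap is the degree comparison. To apply Lemma~\ref{trans1} to your switch you need $d(v_i) \ge d(u)$ for the chosen $u \in N(v_2)$, and since $d(v_i)$ can be as small as $2$ while neighbors of $v_2$ can have large degree, this is not automatic. The paper's Claim~4 handles it by a case split on whether $d_3 \ge 3$ or $d_3 = 2$; in the latter case it abandons Lemma~\ref{trans1} altogether and performs a three-edge rearrangement $G' = G + v_2v_i + v_1u + xv - v_2u - v_iv - v_1x$ whose effect on $M_2$ is verified by direct computation. Your suggestion that Lemma~\ref{trans2} ``migrates a maximum-degree neighbor onto $v_i$'' does not address this: Lemma~\ref{trans2} is a contradiction device against optimality (as used in Lemma~\ref{v1vi}), and the degree obstruction in the triangle-closing step is of a different nature from the one in the edge-creating step. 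So while your overall strategy is the right one, the proof as proposed is incomplete at its crux.
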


\begin{proof}
The lemma can be proved by induction.
For $i=3,$ the result holds by Lemma \ref{i=3}.
 Assume that for $i-1$, the assertion holds, i.e., there is an optimal graph $G\in \Gamma(\pi)$
in which $\{v_1,v_2,v_3\},\ldots,\{v_1,v_2,v_{i-1}\}$ form $i-3$ triangles.
By Lemma \ref{v1vi}, we may assume $v_1v_i\in E(G)$.
To finish the introduction, it suffices to prove the following claims. For convenience, let
$C_j$ denote triangle $v_1v_2v_j$ for $3\le j\le i-1$.

{\bf Claim 1.} There is an optimal graph $G\in \Gamma(\pi)$ in which there exists a cycle $C_{t'}$ such that $v_1\in V(C_{t'})$,
where $C_{t'}\neq C_j$ for $3\le j\le i-1$.

If Claim 1 doesn't hold for any optimal graph, $v_1\notin C$, $\forall C\neq C_j$ for $3\le j\le i-1$. Assume $C_{t'}$ is a cycle in G and
$C_{t'}\neq C_j$ for $3\le j\le i-1$. Since $d_1\geq d_2\geq c+2$ and there are $c+1$ cycles,
there exists two vertices $u,w\in V(C_{t'})$, $uw\in E(C_{t'})$ and a path $P=u\cdots v_1xy\cdots z$, where $x\notin\{v_2,v_3,\ldots,v_i\}$,
$u\in C_{t'}$ and $d(z)=1$. Note that if $x=v_{j'}$, $2\le j'\le i$, we relabel the path by
$u\cdots v_1v_{j'}xy\cdots z$ and start from $v_{j'}x$ instead of $v_1x$. Let $w\in N(u)\bigcap V(C_{t'})$.

If $d(u)\geq d(x)$, let $G_1=G+uv_1+wx-v_1x-uw$. By Lemma \ref{trans1} $M_2(G_1)\geq M_2(G)$. Note that $G_1\in \Gamma(\pi)$, $v_1$ is
in some cycle not $C_j$ of $G_1$ for $3\le j\le i-1$, a contradiction.
If $d(w)\geq d(x)$, let $G_2=G+wv_1+ux-v_1x-uw$. By Lemma \ref{trans1} $M_2(G_2)\geq M_2(G)$. For the same reason, it's a contradiction.
Thus, $max\{d(u), d(w)\}<d(x)$.

Then take $y\in (N(x)\bigcap V(P))\backslash \{v_1\}$. Similarly, $max\{d(u), d(w)\}<d(y)$. It can be proved that $max\{d(u), d(w)\}<d(z)=1$
by repeating this process, which is a contradiction. Thus, Claim 1 holds.

{\bf Claim 2.} There is an optimal graph $G\in \Gamma(\pi)$ in which there exists a cycle $C_{t'}$ such that $v_1v_2\in E(C_{t'})$,
where $C_{t'}\neq C_j$ for $3\le j\le i-1$.

If Claim 2 doesn't hold for any optimal graph, by Claim 1, we may assume there is an optimal graph $G\in \Gamma(\pi)$ in which
there exists a cycle $C_{t'}$ such that $v_1\in V(C_{t'})$ and $v_2\notin V(C_{t'})$, where $C_{t'}\neq C_j$ for $3\le j\le i-1$.
Because $v_1\in V(C_{t'})$ and there remains $c+4-i$ cycles except $C_j$, $3\le j\le i-1$ and
$d(v_2)-|N(v_2)\bigcap \{v_1,v_3,\ldots,v_{i-1}\}|=c+4-i$, there exists a vertex $z$ and a path $P=v_1v_2xy\cdots z$,
where $d(z)=1$, $x\notin \{v_3,\ldots,v_{i-1}\}$ and P is the shortest path connecting $v_1$ and $z$ such that $v_2$ is on it.
Choose $\{u,v\}\subseteq V(C_{t'})\backslash \{v_1\}$ such that $uv\in E(C_{t'})$.
Note that if  $v_1v_j\in E(C_{t'})$ for $2<j<i$, there is $C_{t''}\subseteq G$ such that $v_1v_2\in E(C_{t''})$.
So $\{v_3,\ldots,v_{i-1}\}\bigcap\{u,v\}=\emptyset$.

Suppose that $max\{d(u),d(v)\}=d(u)$. If $d(u)\geq d(x)$, let $G'=G+uv_2+vx-v_2x-uv$. By Lemma \ref{trans1} $M_2(G')\geq M_2(G)$. Note that
Claim 2 holds for $G'$ which is a contradiction. Thus, $max\{d(u),d(v)\}=d(u)<d(x)$. Similarly, $max\{d(u),d(v)\}<d(y)$. Repeating the above process,
we will yield that $max\{d(u),d(v)\}<d(z)=1$, a contradiction. Thus, Claim 2 holds.

{\bf Claim 3.} There is an optimal graph $G\in \Gamma(\pi)$ in which there exists a cycle $C_{t'}$ such that $v_1v_2,v_1v_i\in E(C_{t'})$,
where $C_{t'}\notin C_j$ for $3\le j\le i-1$.

If Claim 3 doesn't hold for any optimal graph, by Claim 2 and $v_1v_i\in E(G)$, we may assume there is an optimal graph $G\in \Gamma(\pi)$
in which there exists a cycle $C_{t'}$ such that $v_1v_2\in E(C_{t'})$ and $v_i\notin V(C_{t'})$, where $C_{t'}\neq C_j$ for $3\le j\le i-1$.
Choose $u\in (N(v_2)\bigcap V(C_{t'}))\backslash \{v_1\}$ and $v\in N(v_i)\backslash \{v_1\}$. If $u=v$, Claim 3 holds. Thus, $u\neq v$. Then,

Case 1. $u\notin \{v_3,\ldots,v_{i-1}\}$. Let $G'=G+v_2v_i+uv-v_2u-v_iv$. $M_2(G')\geq M_2(G)$ by Lemma \ref{trans1}. Note that $G'\in \Gamma(\pi)$.
So Claim 3 holds for $G'$.

Case 2. $u\in \{v_3,\ldots,v_{i-1}\}$. Choose $w\in N(u)\bigcap V(C_{t'})\backslash \{v_2\}$.

Case 2.1. $w\notin \{v_3,\ldots,v_{i-1}\}\backslash \{u\}$. Let $G'=G+uv_i+wv-uw-v_iv$. Because $d(v_i)\geq d(w)$ and $d(u)\geq d(v_i)$,
$M_2(G')\geq M_2(G)$ by Lemma \ref{trans1}. Note that $G'\in \Gamma(\pi)$. So Claim 3 holds for $G'$.

Case 2.2. $w\in \{v_4,\ldots,v_{i-1}\}\backslash \{u\}$. Let $G'=G+uv_i+wv-uw-v_iv$. By condition $(iii)$, $d(v_i)=d(w)$. So $M_2(G')\geq M_2(G)$
by Lemma \ref{trans1}. Note that $G'\in \Gamma(\pi)$. So Claim 3 holds for $G'$.

Case 2.3. $w=v_3$. Then there is another cycle $C_{t''}=v_2v_3uv_1v_2$ in $G$ such that $v_1v_2\in E(C_{t''})$ and $v_i\notin V(C_{t''})$.
Then by the same method using in Case 2.2, we can conclude that Claim 3 holds.

{\bf Claim 4.} There is an optimal graph $G\in \Gamma(\pi)$ such that $\{v_1,v_2,v_3\},\ldots,\{v_1,v_2,v_i\}$ form $i-2$ triangles in $G$.

By Claim 3, there is an optimal graph $G\in \Gamma(\pi)$ in which there exists a cycle $C_{t'}$ such that $v_1v_2,v_1v_i\in E(C_{t'})$,
where $C_{t'}\neq C_j$ for $3\le j\le i-1$. If Claim 4 doesn't hold for any optimal graph, we may assume $v_2v_i\notin E(C_{t'})$.
Choose $u\in (N(v_2)\bigcap V(C_{t'}))\backslash\{v_1\}$ and $v\in (N(v_i)\bigcap V(C_{t'}))\backslash \{v_1\}$. Note that $u$ and $v$ can be the same vertex.
There are two cases for $u$.

Case 1. $u\notin \{v_3,\ldots,v_{i-1}\}$ which implies $v\notin \{v_3,\ldots,v_{i-1}\}$.

Case 1.1. $d_3\geq 3$. Choose $w\in N(v_3)\backslash \{v_1,v_2\}$. Let $G_1=G+v_3v_i+wv-v_3w-v_iv$ and $G_2=G_1+v_2v_i+v_3u-v_3v_i-v_2u$.
By Lemma \ref{trans1} $M_2(G_2)\geq M_2(G_1)\geq M_2(G)$. Note that $G_1,G_2\in \Gamma(\pi)$ and Claim 4 holds for $G_2$.

Case 1.2. $d_3=2$. Then $d_1\geq c+3$ by condition $(iv)$. So we can choose a vertex $x\in N(v_1)\backslash \{v_2,\ldots,v_i\}$.
Let $G'=G+v_2v_i+v_1u+xv-v_2u-v_iv-v_1x$. Note that $G'\in \Gamma(\pi)$ and $G'$ is connected and $d(v_i)=d(u)=d(v)=2\geq d(x)$.
By elemental calculation, $M_2(G')-M_2(G)=(d_1-2)(2-d(x))\geq 0$. So $M_2(G')\geq M_2(G)$ and Claim 4 holds for $G'$.

Case 2. $u\in \{v_3,\ldots,v_{i-1}\}$. Since $d_2\geq c+2$, we can choose $u'\in N(v_2)\backslash V(C_{t'})\backslash \\\{v_3,\ldots,v_{i-1}\}$.
Let $G'=G+v_2v_i+u'v-v_2u'-v_iv$. By Lemma \ref{trans1} $M_2(G')\geq M_2(G)$. It is easy to check that $G'\in \Gamma(\pi)$ and Claim 4 holds for $G'$.

Thus, we can conclude by introduction that there is an optimal graph $G\in \Gamma(\pi)$
in which $\{v_1,v_2,v_3\},\{v_1,v_2,v_4\},\ldots,\{v_1,v_2,v_{c+3}\},$ form $c+1$ triangles.
\end{proof}

Now we are ready to prove Theorem \ref{general}.

\begin{proof}
The first part of the theorem have be proved by Lemma \ref{introduction}.
So we may assume $\{v_1,v_2,v_3\},\ldots,\{v_1,v_2,v_{c+3}\}$ form $c+1$ triangles in an optimal graph $G\in \Gamma(\pi)$.

Then an ordering $\prec$ of $V(G)$ can be created by the
breadth-first search as follows: firstly, let $v_1\prec v_2\prec\cdots\prec v_{c+3}$; secondly, append all neighbors $u_{c+4},\ldots,u_{d_1+1}$
of $N(v_1)\backslash \{v_2,\ldots,v_{c+3}\}$ to the order list,
these neighbors are ordered such that $u\prec v$ whenever $d(u)>d(v)$ (in the remaining case
the ordering can be arbitrary); thirdly, append all neighbors $u_{d_1+2},u_{d_1+3},\ldots,u_{d_1+d_2-2}$ of $N(v_2)\backslash \{v_1,v_3,\ldots,v_{c+3}\}$
to the ordered list, these neighbors are ordered such that $u\prec v$ whenever $d(u)>d(v)$ (in the remaining case the ordering can be arbitrary);
with the same method we can append the vertices of $N(v_3)\backslash \{v_1,v_2\}, \cdots, N(v_{c+3})\backslash \{v_1,v_2\}$ to the ordered list.
Then, append the vertices $N(x)\backslash \{v_1\}$ to the ordered list, where $d(x)=max\{d(y):y\in N(v_1)\backslash \{v_2,v_3,\ldots,v_{c+3}\}\}$.
Repeat the last process recursively with all vertices $v_1,v_2,\ldots$, until all vertices of G are processed.

Then $H_0=\{v_1\}$.
By the construction of $\prec$, $u\prec v$ implies $h(u)\leq h(v)$. For $v\in H_i(G),i>0$,
we call the unique vertex $u\in N(v)\bigcap H_{i-1}(G)$ the parent of v.
So $u\prec v$, if $u$ is the parent of $v$. Moreover, because the vertices are appended to
the ordered list recursively, if there are two edges $uu_1\in E(T)$ and $vv_1\in E(T)$ such that $u\prec v$, $h(u)=h(u_1)+1$ and
$h(u)=h(v_1)+1$, then $u_1\prec v_1$. 

To prove  the assertion,  it suffices to show that $d(u)\geq d(v)$ holds for each two vertices $u,v\in V(G)$ and $u\prec v$.

If the above proposition doesn't hold, assume $v_i$ is the first vertex in the ordering of $\prec$ with the property $v_i\prec u$
and $d(v_i)<d(u)$ for some $u\in V(G)$. Clearly, $v_i\notin \{v_1,v_2,v_3,\ldots,v_{c+3}\}$ and if $v\prec v_i$, $d(v)\geq d(u)$ holds
for each $u$ with $v\prec u$. Suppose $v_j$ is the first vertex in the ordering $\prec$ such that $v_i\prec v_j$ and
$d(v_j)=max\{d(v_t):i+1\leq t\leq n\}$. By the choice of $v_i$, we can conclude that $v_i\prec v_j$, but $d(v_i)<d(v_j)$.
Let $w_i$ and $w_j$ be the parents of $v_i$ and $v_j$, respectively. Note that $d(v_i)<d(v_j)$. Then $w_i\neq w_j$ and $w_i\prec w_j$
by the construction of $\prec$. It is obvious that $w_iv_j\notin E(G)$. Otherwise there is a cycle in $G$ such that $w_i, w_j, v_j$ are on it
and $E(G)\geq n+c+1$ because $w_i\prec w_j $ and $v_j\notin \{v_1,v_2,v_3,\ldots,v_{c+3}\}$.
Let's consider the following two cases.

Case 1. $w_iv_i$ is in the shortest path that connects $w_j$ and $v_1$. 
We can conclude that $w_i\prec v_i\prec w_j\prec v_j$ and $d(w_i)>d(v_j)>d(w_j)$ by the definition of $v_i$ and $v_j$.
Now we shall prove the following Claim.

{\bf Claim}. There exists some $y\in N(v_j)\backslash \{w_j\}$ such that $d(w_i)=d(v_j)=d(y)$ and $v_iy\notin E(G)$.

Because $v_i\notin \{v_1,v_2,v_3,\ldots,v_{c+3}\}$, $v_iy\notin E(G)$ holds for every $y\in N(v_j)\backslash \{w_j\}$ for the same reason of
$w_iv_j\notin E(G)$. If $d(w_i)>d(y)$ holds for every $y\in N(v_j)\backslash \{w_j\}$, $d(w_i)>d(y)$ holds for all $y\in N(v_j)$
because $d(w_i)>d(w_j)$. So $d(w_i)\geq d(v_j)>d(v_i)$, $w_iv_i\in E(G)$ and $w_iv_j\notin E(G)$. By Lemma \ref{trans2}, there exists
another graph $G'\in \Gamma(\pi)$ such that $M_2(G)<M_2(G')$, a contradiction. Thus, there exists some $y\in N(v_j)\backslash \{w_j\}$
such that $d(w_i)\leq d(y)$. On the other hand, by $w_i\prec v_i\prec w_j\prec v_j\prec y$ and the choice of $v_j$, we have
$d(w_i)\geq d(v_j)\geq d(y)$. Hence, claim holds.

Then there exists some $y\in N(v_j)\backslash \{w_j\}$ such that $d(w_i)=d(v_j)=d(y)>d(v_i)$. Let $G_1=G+w_iv_j+v_iy-w_iv_i-v_jy$. Clearly,
$G_1\in \Gamma(\pi)$. By Lemma \ref{trans1}, $M_2(G)\leq M_2(G_1)$.

Case 2.  $w_iv_i$ is not in the shortest path that connects $w_j$ and $v_1$.

Then $w_jv_i\notin E(G)$. Otherwise we can find a cycle in $G$ such that $v_1,v_i,w_j$ or $v_2,v_i,w_j$  are on it and $E(G)\geq n+c+1$, a contradiction.
Let $G_1=G+w_iv_j+w_jv_i-w_iv_i-w_jv_j$. Then $G_1\in \Gamma(\pi)$. Because $w_i\prec v_i$ and $w_i\prec w_j$, $d(w_i)\geq d(w_j)$ by the choice of $v_i$.
By Lemma \ref{trans1}, $M_2(G)\leq M_2(G_1)$.

Note that $\{v_1,v_2,v_3\},\cdots,\{v_1,v_2,v_{c+3}\}$ still form $c+1$ triangles in $G_1$.
After getting a new graph $G_1\in \Gamma(\pi)$ such that $M_2(G)\leq M_2(G_1)$ in the above two cases, we redefine the ordering $\prec$ to $V(G_1)$
as follows: Let $v_1\prec v_2 \prec \cdots \prec v_{i-1} \prec v_j$ be the first i vertices. Then, append the rest vertices by the same
method which is used in the construction of $\prec$ of $V(G)$. In the redefined ordering, if $v\prec v_j$ or $v=v_j$, $d(v)\geq d(u)$ holds for all $v\prec u$.
Moreover, by the construction of the redefined $\prec$, if there are two edges $uu_1\in E(T)$ and $vv_1\in E(T)$ such that $u\prec v$, $h(u)=h(u_1)+1$ and
$h(u)=h(v_1)+1$, then $u_1\prec v_1$. We can also conclude $h(u)\leq h(v)$ if $u\prec v$.

So repeating the above process at most $t(t\leq n-c-3)$ times, we can get an optimal graph $G_t\in \Gamma(\pi)$ such that $d(u)\geq d(v)$ holds for
each two vertices $u,v\in V(G)$ and $u\prec v$. $G_t$ is isomorphic to the graph constructed in the theorem.
\end{proof}

\section{Proof of Theorem~\ref{main}}

\begin{lemma}\label{main1}
Let $\pi=(d_1, \cdots, d_n)$ be a bicyclic graphic degree sequence.

(1). If $d_n=2$ and $d_2\ge 3$, then the optimal bicyclic graphs in the set ${\mathcal{B}}_{\pi}$ are $B(p,1,q)$ and $B(P_k,P_l,P_1)$ with $p+q=n$ and $k+l=n$.

(2). If $d_n=2$ and $d_2=2$, then the optimal bicyclic graphs in the set ${\mathcal{B}}_{\pi}$ are $B(p,q)$ with $p+q=n$.
\end{lemma}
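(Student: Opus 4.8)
The plan is to first pin down the degree sequences forced by the two hypotheses, then enumerate the bicyclic graphs realizing each sequence, and finally evaluate $M_2$ on each. Since $G$ is bicyclic we have $|E(G)| = n+1$ and $\sum_i d_i = 2n+2$, so the ``excess'' $\sum_i (d_i-2)$ equals $2$. Under the hypothesis $d_n = 2$ the graph has no leaves, hence no pendant trees, so $G$ coincides with its $2$-core. In case (1), $d_2 \ge 3$ together with excess $2$ forces $d_1 = d_2 = 3$ and $d_3 = \cdots = d_n = 2$, i.e. the sequence is $(3,3,2^{(n-2)})$; in case (2), $d_2 = 2$ forces $d_1 = 4$ and $d_2 = \cdots = d_n = 2$, i.e. $(4,2^{(n-1)})$.

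Next I would invoke the standard structure of bicyclic graphs of minimum degree $2$, namely the three base types $B(p,q)$, $B(p,r,q)$ and $B(P_k,P_l,P_m)$. Suppressing the degree-$2$ vertices yields a multigraph on the vertices of degree $\ge 3$: in case (1) two vertices $a,b$ each of degree $3$, and in case (2) a single vertex of degree $4$. A short degree count on this suppressed multigraph (solving $2\ell_a + m = 2\ell_b + m = 3$, resp. $= 4$) shows that case (1) leaves exactly the dumbbell $B(p,r,q)$ (a loop at $a$, a loop at $b$, and one edge joining them) and the theta graph $B(P_k,P_l,P_m)$ (three edges joining $a$ and $b$), while case (2) leaves only two loops at the single vertex, i.e. $B(p,q)$.

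For case (1) the key observation is that, with the sequence $(3,3,2^{(n-2)})$ fixed, the value $M_2(G)$ depends only on whether $a$ and $b$ are adjacent. Classifying the $n+1$ edges as $\{3,3\}$-type, $\{3,2\}$-type or $\{2,2\}$-type, I would obtain $M_2(G) = 4n+17$ when $ab \in E(G)$ and $M_2(G) = 4n+16$ when $ab \notin E(G)$; hence the maximum $4n+17$ is attained exactly when $a,b$ are adjacent, which for the two base types means $r = 1$ and $m = 1$, i.e. the graphs $B(p,1,q)$ and $B(P_k,P_l,P_1)$. Alternatively, starting from any optimal $G$ with $ab \notin E(G)$, the strict inequality $M_2(G') > M_2(G)$ for a suitable $2$-switch bringing $a$ and $b$ together follows directly from Lemma~\ref{trans1} with $v_1 = a$, $v_2 = b$ and degree-$2$ vertices $u_1 \in N(a)$, $u_2 \in N(b)$, since $d(a) = d(b) = 3 > 2$. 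For case (2) the single degree-$4$ vertex has all four neighbors of degree $2$, so every edge is of $\{4,2\}$-type or $\{2,2\}$-type, giving $M_2(G) = 4n+20$ for every $B(p,q)$; as these are the only graphs in $\mathcal{B}_\pi$, all are optimal.

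The main obstacle I anticipate is the classification step rather than the arithmetic: one must argue cleanly that the listed families exhaust all bicyclic graphs with the prescribed degree sequence, and in the transformation variant one must additionally verify that the $2$-switch used to bring $a$ and $b$ together can be chosen so that the resulting graph stays connected (hence remains in $\mathcal{B}_\pi$). A few small values of $n$, where only the adjacent configurations exist, are handled directly, and the endpoint value computations themselves are routine.
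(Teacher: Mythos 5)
Your proof is correct and follows essentially the same route as the paper's: pin down the forced degree sequences $(3,3,2^{(n-2)})$ and $(4,2^{(n-1)})$, enumerate the bicyclic realizations, and compute $M_2$ edge-type by edge-type to get $4n+17$ versus $4n+16$ in case (1) and $4n+20$ in case (2). The only difference is that you justify the exhaustiveness of the base types via the suppressed multigraph on vertices of degree at least $3$, a step the paper's proof simply asserts with ``it is easy to see.''
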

\begin{proof}
 If $d_n=2$ and $d_2\ge 3$,  then the only possible degree sequence is  $\pi= (3, 3, 2^{(n-2)})$ and $G$ is  $B(p,r,q)$ or $B(P_k,P_l,P_m)$.
 It is easy to see that
$M_2(B(p,1,q))=M_2(B(P_k,P_l,P_1))=4n+17
>M_2(B(p,r,q))=M_2(B(P_k,P_l,P_m))=4n+16$ for $r>1,m>1$. Hence (1) holds.

If  $d_n=2$ and $d_2= 2$, then $G$ is $B(p,q)$ with $p+q-1=n$. It is easy to see that
$M_2(B(p, q))=4n+20$. Hence (2) holds.
\end{proof}

\begin{lemma}\label{main2}
Let $\pi=(d_1, \cdots, d_n)$ be a bicyclic graphic  sequence. Suppose the number of leaves in the graph of ${\mathcal{B}}_{\pi}$ is $s$. If $d_n=1$ and $d_2=2$, then the optimal bicyclic graphs in the set ${\mathcal{B}}_{\pi}$ are

(1). $B(p,q;p_1,p_2,\ldots,p_s)$ with $p_i\geq 2$ for $1\leq i\leq s$ when $s\le\frac{n-5}{2}$.

(2). $B(3,3;2,\cdots,2,1,\cdots,1)$ with $p_1=\cdots=p_{n-s-5}=2$ and $p_{n-s-4}=\cdots=p_s=1$ when $s>\frac{n-5}{2}$.
\end{lemma}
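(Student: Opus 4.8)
The plan is to first pin down both the degree sequence and the coarse shape of every member of $\mathcal{B}_\pi$, and only then reduce the entire optimization to a single counting parameter. Since $d_2=2$, every $d_i$ with $i\ge 2$ lies in $\{1,2\}$; writing $s$ for the number of leaves, the bicyclic identity $\sum_i d_i=2n+2$ forces $d_1=s+4$, so that $\pi=(s+4,2^{(n-1-s)},1^{(s)})$ is completely determined by $n$ and $s$. I would then show that every $G\in\mathcal{B}_\pi$ has the form $B(p,q;p_1,\dots,p_s)$. For this I examine the base (the $2$-core obtained by iteratively deleting degree-$1$ vertices): it is a bicyclic graph of minimum degree $\ge 2$, hence an $\infty$-graph, a $\theta$-graph, or a dumbbell. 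The latter two each contain two vertices of degree $3$, whereas $v_1$ is the only vertex of $G$ of degree $\ge 3$; therefore the base must be the $\infty$-graph with $v_1$ as its degree-$4$ centre. Any pruned branch attaches to the base at a vertex whose degree in $G$ exceeds its degree in the base, hence is $\ge 3$, hence is $v_1$; and since every branch vertex has degree $\le 2$, each branch is a path. As the number of paths equals the number of leaves, this yields $G=B(p,q;p_1,\dots,p_s)$ with $p,q\ge 3$, $p_i\ge 1$, and $p+q+\sum_i p_i=n+1$.

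The second step is a direct evaluation of $M_2$ on this family. Let $a$ denote the number of indices $i$ with $p_i=1$, i.e.\ the pendant leaves hung directly at $v_1$. I split the edges into the $s+4$ edges incident to $v_1$ and the remaining edges lying on the two cycles and the paths. Since $v_1$ has four cycle-neighbours of degree $2$, together with $s-a$ path-neighbours of degree $2$ and $a$ leaf-neighbours of degree $1$, the edges at $v_1$ contribute $(s+4)(8+2s-a)$; the cycle edges away from $v_1$ contribute $4(p-2)+4(q-2)$; a path of length $\ell\ge 2$ contributes $4\ell-6$, while a pendant leaf contributes nothing beyond its $v_1$-edge. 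Eliminating $p,q$ and the path lengths via $p+q+\sum_i p_i=n+1$, everything collapses to
\[ M_2\big(B(p,q;p_1,\dots,p_s)\big)=4n+2s^2+10s+20-a(s+2). \]
The crucial feature is that this depends on the whole configuration only through $a$, and is strictly decreasing in $a$.

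The final step is therefore to minimize $a$ subject to feasibility. Requiring every non-pendant path to have length $\ge 2$ and both cycles to have length $\ge 3$ gives, from $p+q+\sum_i p_i=n+1$, the bound $a\ge 2s-n-1+(p+q)$, so the minimum of $a$ is attained only when $p=q=3$ and every non-pendant path has length exactly $2$. If $s\le\frac{n-5}{2}$ this minimum is $a=0$, which is realizable precisely when all $p_i\ge 2$; every such graph then shares the maximal value $4n+2s^2+10s+20$, and any graph with some $p_i=1$ has $a\ge1$ and strictly smaller value, giving part $(1)$. If $s>\frac{n-5}{2}$ then $a=0$ is infeasible and the minimum is $a=2s-n+5>0$; attaining it forces $p=q=3$ together with exactly $n-s-5$ paths of length $2$ and $2s-n+5$ pendant leaves, i.e.\ the unique graph $B(3,3;2,\dots,2,1,\dots,1)$, which one computes to have $M_2=sn+6n+s+10$, giving part $(2)$. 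The main obstacle is the structural step: justifying carefully that no base other than the $\infty$-graph can occur and that all branches degenerate to paths, since both the closed-form count and the entire case split rest on this rigidity.
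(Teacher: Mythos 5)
Your proof is correct, and it is considerably more explicit than the paper's, which disposes of this lemma with the single sentence ``the lemma can be proved easily by exhaustion'' and then merely lists the claimed optimal families together with their $M_2$-values. You supply the two ingredients that the paper leaves implicit: first, the structural rigidity step (the $2$-core of any $G\in\mathcal{B}_\pi$ must be an $\infty$-graph centred at the unique vertex of degree $\ge 3$, and every pendant tree must be a path hanging at that centre, so $\mathcal{B}_\pi$ consists exactly of the graphs $B(p,q;p_1,\dots,p_s)$); second, the closed form $M_2=4n+2s^2+10s+20-a(s+2)$, which shows that $M_2$ depends on the configuration only through the number $a$ of pendant edges at $v_1$ and is strictly decreasing in $a$. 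This reduces both cases of the lemma to minimizing $a$ subject to $a\ge \max\{0,\,2s-(n+1)+(p+q)\}$, and your case analysis and the resulting values $4n+2s^2+10s+20$ and $sn+6n+s+10$ agree with the paper's. The only cosmetic slip is the sentence asserting that the minimum of $a$ ``is attained only when $p=q=3$ and every non-pendant path has length exactly $2$'': that rigidity is needed (and true) only in case $(2)$, since in case $(1)$ the value $a=0$ is attained by many choices of $(p,q)$; your subsequent treatment of case $(1)$ already handles this correctly, so nothing substantive is affected.
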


\begin{proof}
We may write $\pi=(d_1,2^{(k)},1^{(s)})$, where $k=n-s-1$ and $d_1=2n-2k-s+2$. The lemma can be proved easily by exhaustion.

(1). $s\le\frac{n-5}{2}$ i.e $k\geq s+4$ i.e. $n\leq 2k-3$.

The optimal graphs are
$B(3,3;k-s-2,2,2,\ldots,2),B(3,3;k-s-3,3,2,\ldots,2),\\
\cdots ,B(p,q;p_1,p_2,\ldots,p_s)$ whose second Zagreb indices are all equal to
$2\times (n-k+3)(n-k+3)+2\times 2 \times (2k-n-1)+2\times 1 \times (n-k-1)=2n^2-4nk+2k^2+10n-6k+12=4n+2s^2+10s+20$,
where $p_i\geq 2$ for $1\leq i\leq s$.

(2). $s>\frac{n-5}{2}$ i.e. $4\leq k<s+4$ i.e. $n>2k-3$.

The unique optimal graph of this case is $B(3,3;2,\cdots,2,1,\cdots,1)$ whose second Zagreb index is
$2\times (n+3-k)k+1\times (n+3-k)(n-2k+3)+2\times 2 \times 2+1\times 2 \times (k-4)=n^2-nk+6n-k+9=sn+6n+s+10$,
where $p_1=\cdots=p_{k-4}=2$ and $p_{k-3}=\cdots=p_s=1$.
\end{proof}

Now we are ready to prove Theorem~\ref{main}.

\begin{proof}
 It is easy to see that the assertion  follows from Lemmas \ref{main1}, \ref{main2} and  Theorem \ref{general}.
\end{proof}

\section{Proof of Theorem~\ref{differentdegree}}
In order to prove Theorem~\ref{differentdegree},  we need some lemmas
\begin{lemma}\label{majorization}(\cite{Marshall1979})
Let $\pi$ and $\pi'$ be two different non-increasing graphic sequences. If $\pi\triangleleft\pi'$, then there exists a series of non-increasing graphic sequences
$\pi_1,\pi_2,\ldots,\pi_k$ such that $\pi=\pi_0\triangleleft\pi_1\triangleleft\pi_2\triangleleft\ldots\triangleleft\pi_k\triangleleft\pi_{k+1}\triangleleft\pi'$, where $\pi_i$ and $\pi_{i+1}$ differ only in two positions and the differences are 1 for $0\leq i\leq k$.
\end{lemma}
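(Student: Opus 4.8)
My plan is to induct on the integer $t=\frac{1}{2}\sum_{i=1}^{n}|d_i-d_i'|$, which is positive because $\pi\neq\pi'$ and well defined because $\sum_i d_i=\sum_i d_i'$. When $t=1$ the two equal-sum non-increasing sequences can differ only in one coordinate carrying $+1$ and one carrying $-1$, so the chain is already $\pi\triangleleft\pi'$. For $t\ge 2$ it suffices to exhibit a single non-increasing \emph{graphic} sequence $\bar\pi=(\bar d_1,\dots,\bar d_n)$, obtained from $\pi'$ by one unit transfer, with $\pi\triangleleft\bar\pi\triangleleft\pi'$ and $\sum_i|d_i-\bar d_i|=\sum_i|d_i-d_i'|-2$; applying the inductive hypothesis to the pair $(\pi,\bar\pi)$ and then appending the step $\bar\pi\triangleleft\pi'$ produces the required refinement into unit transfers.

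To build $\bar\pi$ I would push $\pi'$ one unit toward $\pi$. Let $h_0$ be the first index where the sequences differ; the partial-sum inequalities force $d_{h_0}'>d_{h_0}$. Let $h$ be the last index of the block of $\pi'$ with value $d_{h_0}'$, and let $l$ be the first index of the block of $\pi'$ containing the least index beyond $h$ at which $\pi$ strictly exceeds $\pi'$ (such an index exists because the totals agree while $\sum_{i\le h}d_i<\sum_{i\le h}d_i'$). Monotonicity of $\pi$ guarantees that these block boundaries still satisfy $d_h'>d_h$ and $d_l'<d_l$, and that $d_h'>d_l'$; I then set $\bar\pi$ to be $\pi'$ with its $h$-th coordinate lowered by one and its $l$-th coordinate raised by one. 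The order, majorization and distance checks are routine: the block-boundary choice keeps $\bar\pi$ non-increasing, the only dangerous configuration ($l=h+1$ with $d_h'=d_l'+1$) being impossible since it would force $d_{h+1}>d_h$ in $\pi$; lowering the early partial sums by one on the range $[h,l-1]$, where $\sum_{i\le j}d_i<\sum_{i\le j}d_i'$ strictly, yields $\pi\triangleleft\bar\pi\triangleleft\pi'$; and both altered coordinates move one step closer to $\pi$, so the distance drops by $2$.

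The crux, and what separates this statement from the purely numerical majorization decomposition of \cite{Marshall1979}, is that every intermediate sequence must itself be graphic. Rather than re-verifying the Erd\H{o}s--Gallai conditions for $\bar\pi$, I would realize the transfer at the level of graphs. Fix a simple graph $G'$ with degree sequence $\pi'$, let $u$ be a vertex of degree $d_h'$ and $w$ a vertex of degree $d_l'$, and observe that $N(u)\setminus(N(w)\cup\{w\})\neq\emptyset$: if it were empty, every neighbour of $u$ other than $w$ would also be adjacent to $w$, forcing $\deg u\le\deg w$ and contradicting $d_h'>d_l'$. Choosing $z$ in this set and replacing the edge $uz$ by $wz$ keeps the graph simple, lowers $\deg u$ by one, raises $\deg w$ by one, and fixes all other degrees, so the resulting graph realizes exactly $\bar\pi$ and hence $\bar\pi$ is graphic. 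I expect this edge-rewiring to be the delicate point, since its existence and its compatibility with the non-increasing order and with the window $\pi\triangleleft\cdot\triangleleft\pi'$ all hinge on the inequality $d_h'>d_l'$ that the ordering of $\pi'$ supplies for free; the remaining bookkeeping is standard.
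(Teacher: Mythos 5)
Your argument is correct, but note that the paper itself offers no proof of this lemma: it is quoted as a known result with a citation to Marshall and Olkin, so there is no in-paper argument to compare against. What the cited reference actually supplies is the purely numerical fact that majorization decomposes into elementary unit transfers; the additional claim implicit in the lemma as stated here --- that every intermediate sequence can be taken to be \emph{graphic} --- is exactly the point you isolate and settle. Your induction on $t=\frac{1}{2}\sum_i|d_i-d_i'|$ is sound: the block-boundary choice of $h$ and $l$ keeps $\bar\pi$ non-increasing (and your exclusion of the case $l=h+1$, $d_h'=d_l'+1$ via the monotonicity of $\pi$ is the right observation), the strictness of the partial-sum inequalities on $[h,l-1]$ gives $\pi\triangleleft\bar\pi\triangleleft\pi'$, and the edge-rewiring step (moving an edge $uz$ with $z\in N(u)\setminus(N(w)\cup\{w\})$ to $wz$, which exists precisely because $d(u)>d(w)$) correctly certifies that $\bar\pi$ is graphic without invoking Erd\H{o}s--Gallai. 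In short, you have written out a complete, self-contained proof of a statement the paper only cites, and the one genuinely non-bibliographic ingredient --- graphic realizability of each intermediate sequence --- is handled correctly.
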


\begin{lemma}\label{trans3}(\cite{Liu2012})
Let $u,v$ be two vertices of a connected graph G, and $w_1,w_2,\ldots,w_k$ $(1\leq k\leq d(v))$ be some vertices of $N(v)\backslash (N(u)\bigcup \{u\})$.
Let $G'=G+w_1u+w_2u+\cdots+w_ku-w_1v-w_2v-\cdots-w_kv$. If $d(u)\geq d(v)$ and $\sum_{y\in N(u)}d(y)\geq \sum_{x\in N(v)}d(x)$, then $M_2(G')>M_2(G)$.
\end{lemma}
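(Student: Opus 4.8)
The plan is to compute the difference $M_2(G')-M_2(G)$ directly, by tracking exactly which vertex degrees and which edge-products change under the edge-switching operation. Write $a=d_G(u)$, $b=d_G(v)$, and $C=\sum_{i=1}^{k}d_G(w_i)$, noting $C\ge k$ since each $w_i$ has degree at least $1$. The crucial first observation is that the operation alters only two degrees: passing from $G$ to $G'$, vertex $u$ gains the $k$ genuinely new neighbors $w_1,\dots,w_k$ (none lay in $N_G(u)$), so $d_{G'}(u)=a+k$; vertex $v$ loses exactly these neighbors, so $d_{G'}(v)=b-k$; and every other vertex, in particular each $w_i$ (which merely trades its edge to $v$ for an edge to $u$), keeps its degree. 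Consequently the only edges whose product can change are the $k$ switched edges, the surviving edges incident to $u$, the surviving edges incident to $v$, and the edge $uv$ itself if it is present.

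First I would handle the case $uv\notin E(G)$, where the bookkeeping is cleanest. The added edges $w_iu$ contribute $(a+k)C$ and the deleted edges $w_iv$ contributed $bC$; each surviving edge at $u$ gains a factor $k$ on its far endpoint's (unchanged) degree, summing to $kS_u$ with $S_u=\sum_{x\in N_G(u)}d_G(x)$; and each surviving edge at $v$ loses a factor $k$, summing to $-k(S_v-C)$ with $S_v=\sum_{y\in N_G(v)}d_G(y)$, the $-C$ correcting for the already-deleted $w_i$. Collecting terms yields
\[
M_2(G')-M_2(G)=C(a-b+2k)+k(S_u-S_v).
\]
Since $a\ge b$, $S_u\ge S_v$, and $C\ge k\ge 1$, both summands are nonnegative and the first is at least $2k^2>0$, so the difference is strictly positive.

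Finally I would treat the case $uv\in E(G)$, which is the one place the argument needs extra care and is where I expect the (minor) obstacle. Now $uv$ survives but its product changes from $ab$ to $(a+k)(b-k)$, and when summing surviving edges at $u$ and at $v$ one must exclude this shared edge to avoid double counting, so those sums become $k(S_u-b)$ and $-k(S_v-a-C)$ respectively. Using $(a+k)(b-k)-ab=-k(a-b)-k^2$ and collecting, the cross terms $\pm ka$ and $\pm kb$ cancel and one obtains
\[
M_2(G')-M_2(G)=C(a-b+2k)-k^2+k(S_u-S_v),
\]
the same expression as before up to a $-k^2$ correction from the $uv$ edge. The key point is that this correction is absorbed: from $C\ge k$ and $a\ge b$ we get $C(a-b+2k)\ge 2kC\ge 2k^2$, so the right-hand side is at least $2k^2-k^2=k^2>0$. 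In both cases $M_2(G')>M_2(G)$, completing the proof. I would also remark that connectivity of $G$ is never used; the statement is a purely local identity about degree products, and the strictness comes entirely from the term $C(a-b+2k)$ dominating the possible $-k^2$.
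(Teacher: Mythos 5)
Your computation is correct: in both cases ($uv\notin E(G)$ and $uv\in E(G)$) the difference $M_2(G')-M_2(G)$ works out to $C(a-b+2k)+k(S_u-S_v)$, respectively that quantity minus $k^2$, and the bound $C(a-b+2k)\ge 2kC\ge 2k^2$ gives strict positivity. Note that the paper itself offers no proof of this lemma --- it is quoted verbatim from the reference [Liu, Liu 2012] --- so there is no in-paper argument to compare against; your direct bookkeeping of the changed degrees and edge products is a complete and self-contained verification, and your observation that connectivity is never used is accurate.
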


\begin{lemma}\label{d2=2}
Let $\pi=(d_1, \ldots, d_n)$  and $\pi'=(d_1', \ldots, d_n')$ be two bicyclic graphic degree sequence. Suppose that at most one following condition holds.

(i)\quad$d_2=3$ and $d_n=1$.

(ii)\quad$d_2'=3$ and $d_n'=1$.

If  there exist $1\le p<q\le n$  with $d_p=d_p'+1$, $d_q=d_q-1$  for $1\le p<q\le n$  and $d_i=d_i'$ for all $i\neq p, q$, then $M_2({\pi})<M_2({\pi'})$.
\end{lemma}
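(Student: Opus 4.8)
We are comparing two bicyclic degree sequences $\pi$ and $\pi'$ that differ by a single unit transfer: $\pi$ is obtained from $\pi'$ by raising position $p$ by one and lowering position $q$ by one (with $p<q$), so $\pi \triangleleft \pi'$ in the majorization order. The claim is the strict inequality $M_2(\pi) < M_2(\pi')$ for the maximum second Zagreb indices over the respective bicyclic classes, under the technical proviso that at most one of the two "boundary" configurations $(d_2=3, d_n=1)$ can hold. The natural strategy is to start from an optimal graph realizing $M_2(\pi)$ and exhibit a concrete graph in $\mathcal{B}_{\pi'}$ whose index is strictly larger; since $M_2(\pi')$ is the maximum over $\mathcal{B}_{\pi'}$, this establishes the inequality.

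**The plan.** First I would invoke Theorem~\ref{main} (via Lemmas~\ref{main1} and~\ref{main2}) to pin down the structure of an optimal graph $G$ attaining $M_2(\pi)$, branching on the cases $d_2=2$ versus $d_2\ge 3$ and on whether $d_n=1$ or $d_n=2$. In each case the optimal graph is explicitly known (a cycle-pair $B(p,q)$, a graph $B(p,1,q)$, a path-attached $B(p,q;p_1,\dots,p_s)$, or a BFS-graph $B_M^*(\pi)$), so its index is a closed-form expression in $n$ and $s$. The core idea is the vertex-shifting move captured in Lemma~\ref{trans3}: pick a vertex $u$ of degree $d_q'$ (the raised target) and a vertex $v$ of degree $d_q$ (the lowered one), and move one neighbor from $v$ to $u$. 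When $d(u)\ge d(v)$ and the neighbor-degree-sum condition $\sum_{y\in N(u)}d(y)\ge\sum_{x\in N(v)}d(x)$ holds, Lemma~\ref{trans3} gives a strict increase $M_2(G')>M_2(G)$, and the resulting $G'$ has degree sequence exactly $\pi'$, so $M_2(\pi')\ge M_2(G')>M_2(G)=M_2(\pi)$.

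**Executing the cases.** Rather than a single uniform move, I expect the cleanest route is to compare the explicit formulas case by case, since Theorem~\ref{main} already furnishes them. For the regimes $d_2=2, d_n=2$ and $d_2=2, d_n=1$ one has $M_2(\pi)$ as a polynomial in $n$ and the leaf count $s$; raising $d_p$ and lowering $d_q$ changes $s$ (or the path-length distribution) in a controlled way, and the difference $M_2(\pi')-M_2(\pi)$ can be read off directly from the formulas $4n+20$, $4n+2s^2+10s+20$, and $sn+6n+s+10$ as a strictly positive quantity. For the regime $d_2\ge 3$ the optimal graph is $B_M^*$, and here I would fall back on the transformation Lemma~\ref{trans3}, constructing $G'\in\mathcal{B}_{\pi'}$ from the optimal $G$ by the single neighbor-transfer and verifying the two hypotheses of that lemma hold after choosing $u,v$ so that $d(u)=d_p'\ge d_q=d(v)$, which is automatic since $p<q$ forces $d_p'\ge d_q$.

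**The main obstacle.** The delicate point is the neighbor-degree-sum hypothesis $\sum_{y\in N(u)}d(y)\ge\sum_{x\in N(v)}d(x)$ in Lemma~\ref{trans3}, together with guaranteeing that a suitable transferable neighbor $w\in N(v)\setminus(N(u)\cup\{u\})$ actually exists and that $G'$ remains connected and bicyclic. This is precisely where the exceptional proviso --- that conditions (i) and (ii) cannot both hold --- earns its keep: when both $\pi$ and $\pi'$ sit at the boundary $d_2=3, d_n=1$, the extremal graphs degenerate (the high-degree vertex and the leaf structure leave no room to perform a strictly-increasing transfer that simultaneously respects connectivity and the neighbor-sum inequality), so the strict inequality could fail. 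I would therefore treat the boundary configurations separately, using the explicit formulas from Lemma~\ref{main2} to confirm strictness directly whenever at most one of (i),(ii) holds, and I expect the bulk of the careful work to lie in checking the neighbor-sum condition and connectivity for the $d_2\ge 3$ case via the BFS-structure of $B_M^*$.
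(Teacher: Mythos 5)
Your plan matches the paper's own proof in substance: Lemma~\ref{d2=2} is proved there ``by exhaustion,'' i.e., by listing the four possible forms of $\pi$ (namely $(3,3,2^{(n-2)})$, $(4,2^{(n-1)})$, and $(d_1,2^{(k)},1^{(s)})$ in the two regimes $n\le 2k-3$ and $n>2k-3$), enumerating the adjacent sequences $\pi_1\triangleleft\pi\triangleleft\pi_2$ differing in two positions by one, and comparing the closed-form values of $M_2$ supplied by Lemmas~\ref{main1} and~\ref{main2} (and by $M_2(B_M^*(\cdot))$ when a neighbouring sequence has $d_2=3$ and $d_n=1$), exactly as you propose. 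The only part of your outline left unexecuted is the case-by-case arithmetic itself---in particular tracking which regime each neighbouring sequence falls into, since the formula for $M_2$ changes at the boundary $n=2k-3$---which is precisely the content of the paper's displayed computations.
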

\begin{proof}
This Lemma can be proved by exhaustion. Let $G_\pi$ be an optimal graph with degree sequence $\pi$.
Then for each degree sequences $\pi$, the method to prove the lemma is to find all possible degree sequences $\pi_1,\pi_2$ such that $\pi_1\triangleleft\pi\triangleleft\pi_2$, where $\pi_1,\pi$ and $\pi,\pi_2$ differ only in two positions, where the difference are 1. After that, prove $M_2(G_{\pi_1})<M_2(G_{\pi})<M_2(G_{\pi_2})$. Without loss of generality, we may assume condition (i) doesn't hold.
There are four cases for $\pi$.

Case 1. $\pi=(3,3,2^{(n-2)})$.

It is easy to check that for any other bicyclic sequences $\pi'$ satisfying the conditions in Lemma \ref{d2=2}, $\pi\triangleleft\pi'$ holds and
$M_2(\pi)=4n+17<M_2(\pi')$.

Case 2. $\pi=(4,2^{(n-1)})$.

The all possible sequences for $\pi_1$ and $\pi_2$ are $\pi_1=(3,3,2^{(n-2)})$ and $\pi_2=(5,2^{(n-2)},1)$,
$\pi_2'=(4,3,2^{(n-3)},1)$. By the preceding proof and calculation, $M_2(G_{\pi_1})=4n+17$, $M_2(G_{\pi})=4n+20$,
$M_2(G_{\pi_2})=2n^2-4nk+2k^2+10n-6k+12=4n+32~(k=n-2)$, $M_2(G_{\pi_2'})=M_2(B_M^*(\pi_2'))=4n+26$ and
$M_2(G_{\pi_1})<M_2(G_{\pi})<M_2(G_{\pi_2'})<M_2(G_{\pi_2})$. Lemma \ref{d2=2} holds for this case.

Case 3. $\pi=(d_1,2^{(k)},1^{(s)})$,where $k\geq s+4$ i.e. $n\leq 2k-3$.

The all possible sequences for $\pi_1$ and $\pi_2$ are
$\pi_1=(d_1-1,2^{(k+1)},1^{(s-1)})$, $\pi_1'=(d_1-1,3,2^{(k-1)},1^{(s)})$ and $\pi_2=(d_1+1,2^{(k-1)},1^{(s+1)})$, $\pi_2'=(d_1,3,2^{(k-2)},1^{(s+1)})$.
By the preceding proof and calculation,

$M_2(G_{\pi_1})=2n^2-4nk+2k^2+6n-2k+8$,

$M_2(G_{\pi_1}')=2n^2-4nk+2k^2+7n-3k+12$,

$M_2(G_{\pi})=2n^2-4nk+2k^2+10n-6k+12$,


$M_2(G_{\pi_2})=2n^2-4nk+2k^2+14n-10k+20~for~n\leq2k-5$;

$M_2(G_{\pi_2})=n^2-n(k-1)+6n-(k-1)+9=n^2-nk+7n-k+10~for~n=2k-4,2k-3$,

$M_2(G_{\pi_2'})=2n^2-4nk+2k^2+11n-7k+17~for~n\leq 2k-4$;

$M_2(G_{\pi_2'})=2n^2-4nk+2k^2+14n-14k+28~for~n=2k-3$.

So $M_2(G_{\pi_1})<M_2(G_{\pi_1'})<M_2(G_{\pi})<M_2(G_{\pi_2'}<M_2(G_{\pi_2})$ and Lemma \ref{d2=2} holds for this case.

Case 4. $\pi=(d_1,2^{(k)},1^{(s)})$, where $4\leq k<s+4$ i.e. $n>2k-3$.

The all possible sequences for $\pi_1$ and $\pi_2$ are the same as the above case
except that the $M_2$ is different. By the preceding proof and calculation, $M_2(G_{\pi_1})=n^2-nk+5n-k+8~for~n>2k-1$;

$M_2(G_{\pi_1})=2n^2-4nk+2k^2+6n-2k+8~for~n=2k-2,2k-1$,

$M_2(G_{\pi_1'})=n^2-nk+5n-k+12$,

$M_2(G_{\pi})=n^2-nk+6n-k+9$,

$M_2(G_{\pi_2})=n^2-nk+7n-k+10$,

$M_2(G_{\pi_2'})=n^2-nk+6n-k+13$,

Lemma \ref{d2=2} also holds for this case.
\end{proof}

\begin{lemma}\label{d2>2}
Let $\pi=(d_1, \ldots, d_n)$  and $\pi'=(d_1', \ldots, d_n')$ be two bicyclic graphic degree sequence with $M_2({\pi})$ and $M_2({\pi'})$ being the maximum second Zagreb index in the set ${\mathcal{B}}_{\pi}$. Suppose that $d_2\ge 3$, $d_2'\ge 3$ and  $d_n=1$, $d_n'=1$. If  there exist $1\le p<q\le n$  with $d_p=d_p'+1$, $d_q=d_q-1$  for $1\le p<q\le n$  and $d_i=d_i'$ for all $i\neq p, q$, then $M_2({\pi})<M_2({\pi'})$.
\end{lemma}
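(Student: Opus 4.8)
Lemma \ref{d2>2} requires showing that, under a single degree-shift raising $d_p$ by one and lowering $d_q$ by one between two bicyclic sequences with $d_2, d_2' \ge 3$ and $d_n = d_n' = 1$, the maximum second Zagreb index strictly increases along majorization. The plan is to work directly with the optimal graphs, exploiting that by Theorem \ref{general} (part (5) of Theorem \ref{main}), both $\pi$ and $\pi'$ admit the explicit optimal graph $B_M^*(\pi)$ and $B_M^*(\pi')$ respectively, since $d_2 \ge 3$ and $d_n = 1$ place us in case (5). So I may take $G_\pi = B_M^*(\pi)$ and $G_{\pi'} = B_M^*(\pi')$ as concrete optimal representatives whose structure (a BFS-ordering graph with $c=1$, i.e. two triangles sharing the edge $v_1v_2$) is fully determined by the degree sequence.

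First I would reduce to the single-step case exactly as in Lemma \ref{d2=2}: the goal is to find, for each admissible $\pi$, the sequences $\pi_1 \triangleleft \pi \triangleleft \pi_2$ differing in two positions by $\pm 1$, and to verify the strict inequality $M_2(G_{\pi_1}) < M_2(G_\pi) < M_2(G_{\pi_2})$. The cleanest tool here is Lemma \ref{trans3}: given the optimal graph $G_\pi$, I would locate a vertex $u$ of degree $d_p'$ and a vertex $v$ of degree $d_q'$ in $G_{\pi'}$ (the ``smaller'' sequence) and transfer one neighbor from $v$ to $u$ to raise $d(u)$ by one and lower $d(v)$ by one, producing a graph with degree sequence $\pi$. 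Since $B_M^*$ is built by a greedy BFS rule that concentrates degree on the earliest vertices, the hypotheses of Lemma \ref{trans3}, namely $d(u) \ge d(v)$ and $\sum_{y \in N(u)} d(y) \ge \sum_{x \in N(v)} d(x)$, should hold for the appropriate choice; this gives $M_2(G) > M_2(G_{\pi'})$ for the transferred graph $G$, and since $M_2(\pi) = M_2(G_\pi) \ge M_2(G)$, the chain $M_2(\pi') < M_2(\pi)$ follows after composing single steps via Lemma \ref{majorization}.

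The main obstacle I anticipate is verifying the neighbor-sum condition $\sum_{y\in N(u)} d(y) \ge \sum_{x\in N(v)} d(x)$ of Lemma \ref{trans3} uniformly across all the structural sub-cases of $B_M^*(\pi')$. Because the position pair $(p,q)$ can fall in different ``layers'' of the BFS construction (both high-degree vertices near the root, one high and one a leaf, etc.), the neighborhoods of the chosen $u$ and $v$ look quite different, and the monotonicity of the neighbor-sum is not automatic when $v$ has a single high-degree neighbor while $u$'s neighbors are spread over low-degree leaves. I would handle this by choosing $u$ and $v$ to be adjacent or nearly adjacent in the BFS layering so that their neighborhoods overlap as much as possible, reducing the inequality to comparing only the ``private'' neighbors; the BFS-ordering property (degrees non-increasing along $\prec$) then forces the private neighbors of the earlier vertex to dominate. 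Where the degree shift crosses the boundary between the triangle-carrying core $\{v_1,\dots,v_{c+3}\}$ and the pendant tree, I expect to need a short separate computation, analogous to the explicit polynomial evaluations in Lemma \ref{d2=2}, rather than a direct appeal to Lemma \ref{trans3}.

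Finally, to conclude $M_2(\pi) \le M_2(\pi')$ with equality iff $\pi = \pi'$ in Theorem \ref{differentdegree}, I would combine Lemma \ref{majorization} with Lemmas \ref{d2=2} and \ref{d2>2}: any $\pi \triangleleft \pi'$ decomposes into a finite chain of unit two-position steps, each of which strictly increases $M_2$ by one of the two lemmas (dispatching on whether $d_2 = 2$ or $d_2 \ge 3$ and on the $d_n$ values). Strictness at every step yields strict overall inequality whenever $\pi \neq \pi'$, which is exactly the claimed statement.
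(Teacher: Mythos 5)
Your plan follows the same route as the paper's proof: realize the majorized sequence by its explicit optimal graph $B_M^*$ (Theorem \ref{main}(5)), transfer a single neighbor from the later vertex to the earlier one, and invoke Lemma \ref{trans3} to obtain a strict increase into the other class. Two things, however, keep the sketch from being a proof. First, the orientation: the printed hypothesis ``$d_p=d_p'+1$, $d_q=d_q-1$'' is garbled --- the paper's own proof (and the proof of Theorem \ref{differentdegree}) uses $d_p'=d_p+1$, $d_q'=d_q-1$, so that $\pi\triangleleft\pi'$, and one starts from $B_M^*(\pi)$ and moves a neighbor of $v_q$ to $v_p$ to land in $\Gamma(\pi')$. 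Taking the typo literally, you treat $\pi'$ as the majorized sequence and end up arguing $M_2(\pi')<M_2(\pi)$, the reverse of the stated conclusion; the argument is symmetric, but the orientation must be fixed before anything is written down.

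Second, and more substantively, the hypotheses of Lemma \ref{trans3} that you defer with ``should hold'' are precisely the content of the paper's proof. The inequalities $d(v_p)\ge d(v_q)$ and $\sum_{x\in N(v_p)}d(x)\ge\sum_{y\in N(v_q)}d(y)$ are obtained directly from the BFS-ordering of $B_M^*(\pi)$ established at the end of the proof of Theorem \ref{general} ($v_p\prec v_q$ forces both the degrees and, via property (3) of the BFS-ordering, the neighbors of $v_p$ to dominate those of $v_q$); no case analysis over BFS layers and no separate polynomial computations in the style of Lemma \ref{d2=2} are needed. What does require checking --- and what your sketch omits entirely --- is that $v_q$ actually possesses a transferable neighbor, i.e.\ some $v_k\in N(v_q)\setminus(N(v_p)\cup\{v_p\})$ off the $v_p$--$v_q$ path, since Lemma \ref{trans3} only moves neighbors of $v$ lying outside $N(u)\cup\{u\}$. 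The paper verifies this by a short case analysis on the position of $q$ relative to the two triangles (using $d_q\ge 4$ when $q=2$, $d_q\ge 3$ when $q\in\{3,4\}$, and $d_q\ge 2$ otherwise). Without that existence check the single-edge transfer is not justified, so the proposal, while on the right track, is incomplete where the actual work lies.
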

\begin{proof}
By Theorem \ref{main}, $M_2({\pi})=M_2(B_M^*({\pi}))$. So it suffice to show that $M_2(B_M^*({\pi}))<M_2(\pi')$.

We have $v_p\prec v_q$ in the ordering of $V(B_M^*({\pi}))$ since $p<q$ and hence $d(v_p)\geq d(v_q)$.
By the proof of the last part of Theorem \ref{general},
we have $\sum_{x\in N_{B_M^*({\pi})}(v_p)}d(x)\geq \sum_{y\in N_{B_M^*({\pi})}(v_q)}d(y)$.
Let $P$ be the (one of) shortest path from $v_p$ to $v_q$ in $B_M^*({\pi})$.

If $q=2$, then $d_q\geq 4$ because $d_q'=d_q-1\ge3$. If $3\leq q\leq 4$, then $d_q\geq 3$ because $d_q'=d_q-1=2$. If $q\geq 4$, then $d_q\geq 2$. In all these
cases, there exists a vertex $v_k(k>q)$ such that $v_k\in N_{B_M^*({\pi})}(v_q)\backslash N_{B_M^*({\pi})}(v_p)$ and $v_k\notin V(P)$. Let $G=B_M^*({\pi})+v_pv_k-v_qv_k$.

Note that $G\in \Gamma(\pi')$ and $d(v_p)\geq d(v_q)$. By Lemma \ref{trans3}, $M_2(B_M^*({\pi}))<M_2(G)\leq M_2(\pi')$.
\end{proof}

Now we are ready to prove Theorem \ref{differentdegree}.

\begin{proof}
Set $\pi=(d_1,d_2,\ldots,d_n)$ and $\pi'=(d_1',d_2',\ldots,d_n')$. Since $\pi\triangleleft\pi'$, by Lemma \ref{majorization} we may suppose that $\pi$
and $\pi'$ differ only in two positions, where the difference are 1. So we may assume that $d_i=d_i'$ for $i\neq p,q$, and $d_p+1=d_p', d_q-1=d_q'$,
$1\leq p<q\leq n$.

The remaining parts of proofs follow from Lemmas~\ref{d2=2} and \ref{d2>2}.
\end{proof}




\begin{thebibliography}{}

\bibitem{Balaban1983} A.~T.~Balaban, I.~Motoc, D.~Bonchev and O.~Mekenyan, Topological indices for structure-activity correlations, {\it Topics Curr. Chem.}, 114(1983) 21-55.

\bibitem{das2014}K.~Ch.~Das, H.~U.~Jeon  and N.~Trinajsti\'{c}, Comparison between the Wiener index and the Zagreb indices and the eccentric connectivity index for trees. {\it Discrete Appl. Math.}, 71(2014) 35-41.

\bibitem{estes2014}J.~Estes and B.~Wei, Sharp bounds of the Zagreb indices of $k-$trees. {\it J. Comb. Optim.},  27(2014) 271-291.
\bibitem{Gutman2004} I.~Gutman and K.~C.~Das, The first Zagreb index 30 years after, {\it MATCH Commun. Math. Comput. Chem.}, 50(2004) 83-92.


\bibitem{gutman1972}I.~Gutman and N.~Trinajsti\'{c}, Graph theory and molecular orbitals. Total $\pi-$electron
energy of alternant hydrocarbons, {\it Chem. Phys. Lett.},  17(1972) 535-538.



\bibitem{Gutman1975} I.~Gutman, B.~Ru\v{s}\v{c}i\'{c} and C.~F.~Wilcox, Graph theory and molecular orbitals.12.Acyclic polyenes, {\it J. Chem. Phys.}, 62(1975) 3399-3405.

\bibitem{ilic2012}A.~Ili\'{c} and B.~Zhou,  On reformulated Zagreb indices, {\it  Discrete Appl. Math.,}  160 (2012) 204-209.

\bibitem{hua2013} H.~B.~Hua and  K.~Ch.~Das, The relationship between the eccentric connectivity index and Zagreb indices, {\it  Discrete Appl. Math.,}  161 (2011) 2480-2491.


\bibitem{Kier1976} L.~B.~Kier and L.~H.~Hall, {\it Molecular Connectivity in Chemistry and Drug Research}, Academic Press, San Francisco, 1976.

\bibitem{Kier1986} L.~B.~Kier and L.~H.~Hall, {\it Molecular Connectivity in Structure-Activity Analysis}, Wiley, New York, 1986.

\bibitem{Liu2012} M.~H.~Liu, B.~L.~Liu, The second Zagreb indices and Wiener polarity indices of trees with given degree sequences, {\it MATCH Commun. Math. Comput. Chem.}, 67(2012) 439-450.

\bibitem{Liu2014} M.~H.~Liu, B.~L.~Liu, The second Zagreb indices of unicyclic graphs with given degree sequences, {\it Discrete Appl. Math.}, 167(2014) 217-221.

\bibitem{Marshall1979} A.~W.~Marshall, I.~Olkin, {\it Inequalities: Theory of Majorization and Its Applications}, Academic Press, New York, 1979.

\bibitem{Nikolic2003} S.~Nikoli\'{c}, G.~Kova\v{c}evi\'{c}, A.~Mili\v{c}evi\'{c} and N.~Trinajsti\'{c}, The Zagreb indices 30 years after, {\it Croat. Chem. Acta},  76 (2003) 113-124.

\bibitem{Todeschini2000} R.~Todeschini and V.~Consonni, {\it Handbook of Molecular Descriptors}, Wiley VCH, Weinheim, 2000.


\bibitem{Zhang2007} X.-D.~Zhang, The Laplacian spectral radii of trees with degree sequences, {\it Discrete Math.}, 308(2008) 3143-3150.
    \bibitem{zhang2009}X.-D.~Zhang, The signless Laplacian spectral radius of graphs with given degree sequences, {\it  Discrete Appl. Math.}, 157(2009) 2928-2937.
\end{thebibliography}
\end{document}